\theoremstyle{plain}
\newtheorem*{thm*}{Theorem}
\newtheorem*{prop*}{Proposition}
\newtheorem*{rem*}{Remark}
\newtheorem{thm-i}{Theorem}
\newtheorem{prop-i}[thm-i]{Proposition}
\newtheorem{thm}{Theorem}[section]
\newtheorem{defi}[thm]{Definition}
\newtheorem{prop}[thm]{Proposition}
\newtheorem{lm}[thm]{Lemma}
\newtheorem{claim*}{Claim}
\theoremstyle{remark}
\newtheorem{rem}[thm]{Remark}
\newcommand{\Z}{\mathbb Z}
\newcommand{\C}{{\mathcal{C}}}
\newcommand{\F}{{\mathcal{F}}}
\newcommand{\Gr}{{\textbf{Gr}}}
\newcommand{\gr}{{\textbf{gr}}}
\newcommand{\Ab}{{\textbf{Ab}}}
\newcommand{\ab}{{\textbf{ab}}}
\newcommand{\abe}{{\mathfrak{a}}}
\begin{document}

	\title{Extensions between functors from free groups}

		      \author{Christine Vespa}

\address{ Universit\'e de Strasbourg, Institut de Recherche
  Math\'ematique Avanc\'ee, Strasbourg, France. }                        
\email{vespa@math.unistra.fr}    

\date{\today}

\begin{abstract}
Motivated in part by the study of the stable homology of automorphism groups of free groups, we consider cohomological calculations in the category $\F(\gr)$ of functors from finitely generated free groups to abelian groups.
In particular, we compute the groups $Ext^*_{\F(\gr)}(T^n \circ \abe, T^m \circ \abe)$ where $\abe$ is the abelianization functor and $T^n$ is the n-th tensor power functor for abelian groups. These groups are shown to be non-zero if and only if $*=m-n \geq 0$ and $Ext^{m-n}_{\F(\gr)}(T^n \circ \abe, T^m \circ \abe)=\Z[Surj(m,n)]$ where $Surj(m,n)$ is the set of surjections from a set having $m$ elements to a set having $n$ elements. We make explicit the action of symmetric groups on these groups and the Yoneda and external products. We deduce from these computations those of rational Ext-groups for functors of the form $F \circ \abe$ where $F$ is a symmetric or an exterior power functor. Combining these computations with a recent result of Djament we obtain explicit computations of stable homology  of automorphism groups of free groups with coefficients given by particular contravariant functors. 

\vspace{.3cm}

\end{abstract}
\begin{footnote}{Mathematics Subject Classification (2010): 18G15, 18A25, 20J06
}
\end{footnote}

\maketitle


Stable homology with twisted coefficients of various families of groups can be computed thanks to functor homology in a suitable category (see \cite{FFSS} \cite{Sco} \cite{DV} \cite{DV2}). In particular, stable homology of automorphism groups of free groups with coefficients given by a reduced polynomial covariant functor is trivial (see \cite{DV2}). 
Recently, Djament proved in \cite{D15} that stable homology (resp. cohomology) of automorphism groups of free groups with coefficients given by a reduced polynomial contravariant (resp. covariant) functor is governed by Tor groups (resp. Ext groups) in the category of functors from finitely generated free groups to abelian groups. The aim of this paper is to calculate Ext  and Tor groups between concrete functors from groups to abelian groups in order to obtain explicit computations of stable homology  of automorphism groups of free groups with coefficients given by a contravariant functor. Let $\gr$ be a small skeleton of the category of finitely generated free groups, $\F(\gr)$ the category of functors from $\gr$ to abelian groups and $\abe$ the abelianization functor in $\F(\gr)$.
The main result of this paper is:
\begin{thm-i} \label{thm-intro}

Let $n$ and $m$ be natural integers, we have an isomorphism:
$$Ext^{*}_{\F(\gr)}(T^n \circ \abe, T^m \circ \abe)\simeq  \left\lbrace\begin{array}{ll}
 \Z[Surj(m,n)] & \text{if } *=m-n\\
 0 & \text{otherwise}
 \end{array}
 \right.$$
 where $Surj(m,n)$ is the set of surjections from a set having $m$ elements to a set having $n$ elements. 
 
The actions of the symmetric groups $\mathfrak{S}_m$ and $\mathfrak{S}_n$ on $Ext^{*}_{\F(\gr)}(T^n \circ \abe, T^m \circ \abe)$ are induced by the composition of surjections via the previous isomorphism, up to a sign (see Proposition \ref{action2} for the precise signs).

 The Yoneda product is induced by the composition of surjections, up to a sign, (see Proposition \ref{products} for the precise signs) via the previous isomorphism and the external product is induced by the disjoint union of sets.

\end{thm-i}

We remark that this theorem can be expressed elegantly in terms of symmetric sequences (see Proposition \ref{suites-sym}).

In particular for $n=m=1$ we obtain that $Hom_{\F(\gr)}(\abe, \abe)=\Z$ and $Ext^{*}_{\F(\gr)}(\abe, \abe)=0$ for $*>0$. Let $\ab$ be a small skeleton of the category of finitely generated free abelian groups and $\F(\ab)$ be the category of functors from $\ab$ to abelian groups. The previous groups should be compared with the groups $Ext^{*}_{\F(\ab)}(Id, Id)$ corresponding to the MacLane homology of $\Z$. This homology was computed by B\"okstedt by topological methods (unpublished) and reobtained in \cite{FP} by algebraic methods, in particular it is non trivial. Theorem \ref{thm-intro} illustrates that functor homology in $\F(\gr)$ is easier than  functor homology in $\F(\ab)$ (see also \cite{DPV} for another illustration of this fact).

Theorem \ref{thm-intro} is used in \cite[Proposition 4.1 and 4.6]{DPV} in order to compute homological dimension in a category of polynomial functors related to $\F(\gr)$.

The proof of the first part of Theorem \ref{thm-intro} can be decomposed into two steps corresponding to the first two sections of this paper. In the first we compute $Ext^{*}_{\F(\gr)}(\abe, T^m \circ \abe)$ using an explicit projective resolution of the abelianization functor $\abe$. In the second we deduce our result using the sum-diagonal adjunction and an exponential-type property of the tensor power functor $T^\bullet$. The following section is devoted to the study of products on these Ext-groups and the description of the PROP governing these Ext-groups. More precisely, for $g\textbf{Ab}^-$ the category of graded abelian groups where the commutativity is given by the Koszul sign rule:
 $v \otimes w \mapsto (-1)^{|v|.|w|} w \otimes v$, we deduce from Theorem \ref{thm-intro} the following results:
 
 \begin{thm-i}(Propositions \ref{operad} and \ref{PROP})
 \begin{enumerate}
\item 
The graded symmetric sequence $\mathcal{Q}=\{\mathcal{Q}(n)\}_{n\geq 0}$ given by $\mathcal{Q}(n)=Ext^{*}_{\F(\gr)}(\abe, T^n \circ \abe)$ is an operad in $g\textbf{Ab}^-$.
\item 
The symmetric monoidal graded category $\mathcal{E}$ having as objects the finite sets $n$ and such that 
 $$Hom_{\mathcal{E}}(m,n)=Ext^{*}_{\F(\gr)}(T^n \circ \abe, T^m \circ \abe)$$
 is  isomorphic to the graded PROP freely generated by the operad $\mathcal{Q}$.
\end{enumerate}
 \end{thm-i}

In the final section we deduce from Theorem \ref{thm-intro} rational computations of Ext-groups between symmetric power functors $S^k$, exterior power functors $\Lambda^k$  and tensor power functors. 
In particular, we obtain the following results.
\begin{thm-i} \label{Prop-intro}
Let $n$ and $m$ be natural integers, we have isomorphisms:
$$Ext^{*}_{\F(\gr)}((\Lambda^n \circ \abe) \otimes \mathbb{Q}, (\Lambda^m \circ \abe) \otimes \mathbb{Q}) \simeq \left\lbrace\begin{array}{ll}
 \mathbb{Q}^{\rho(m,n)} & \text{if } *=m-n\\
 0 & \text{otherwise}
 \end{array}
 \right.$$
 where $\rho(m,n)$ denotes the number of partitions of $m$ into $n$  parts.

 $$Ext^{*}_{\F(\gr)}((\Lambda^n \circ \abe) \otimes \mathbb{Q}, (S^m \circ \abe) \otimes \mathbb{Q})\simeq \left\lbrace\begin{array}{ll}
   \mathbb{Q}& \text{if } n=m=0 \text{ and } *=0\\
 & \text{or }  n=m=1 \text{ and } *=0\\
 0 & \text{otherwise}
 \end{array}
 \right.$$

 $$Ext^{*}_{\F(\gr)}((\Lambda^n \circ \abe) \otimes \mathbb{Q}, (T^m \circ \abe) \otimes \mathbb{Q}) \simeq \left\lbrace\begin{array}{ll}
 \mathbb{Q}^{S(m,n)} & \text{if } *=m-n\\
 0 & \text{otherwise}
 \end{array}
 \right.$$
 where $S(m,n)$ denotes the Stirling partition number (i.e. the number of ways to partition a set of $m$ elements into $n$ non-empty subsets).

\end{thm-i}
By duality arguments we deduce similar results for Tor groups.

In \cite{D15}, Djament gives a description of stable homology of automorphisms of free groups with coefficients twisted by contravariant functors in terms of $Tor$-groups between functors in $\F(\gr)$. More precisely, for $N: \gr^{op} \to \mathbb{Q}\text{-Mod}$ a polynomial functor, $F_n$ the free group in $n$ generators and any natural integer $i$, Djament has shown \cite[Th\'eor\`eme $1.10$]{D15} that there is an isomorphism:
$$\underset{n \in \mathbb{N}}{\text{colim}}\ H_i(Aut(F_n), N(F_n)) \simeq \underset{k+l=i}{\bigoplus}{Tor}^\gr_k(N, \Lambda^l \circ(\mathfrak{a}\otimes \mathbb{Q})).$$
This statement is equivalent to the following: for $M: \gr \to \mathbb{Q}\text{-Mod}$ a polynomial functor, 
there is  an isomorphism:
$$\underset{n \in \mathbb{N}}{\text{lim}}\ H^i(Aut(F_n), M(F_n)) \simeq \underset{k+l=i}{\bigoplus}{Ext}_\gr^k(\Lambda^l \circ(\mathfrak{a}\otimes \mathbb{Q}),M).$$

To a covariant functor $M: \gr \to \mathbb{Q}\text{-Mod}$ we can associate a contravariant functor $N: \gr^{op} \to \mathbb{Q}\text{-Mod}$ given by $N(G)=M(Hom_{\Gr}(G, \mathbb{Q}))$ where $\Gr$ is the category of groups.
As an application of Proposition \ref{Prop-intro}, using the previous result of Djament, we obtain the following computations of stable homology:

\begin{thm-i} \label{prop-intro}
Let $H_n=Hom_{\Gr}(F_n, \mathbb{Q})$. We have
$$\underset{n \in \mathbb{N}}{\text{colim}}\ H_*(Aut(F_n), T^d\circ(\mathfrak{a}\otimes \mathbb{Q})(H_n)) \simeq  \left\lbrace\begin{array}{ll}
   \mathbb{Q}^{B(d)}& \text{if } *=d\\
 0 & \text{otherwise}
 \end{array}
 \right.$$
where $B(d)$ denotes the $d$-th Bell number (i.e. the number of partitions of a set of $d$ elements),
$$\underset{n \in \mathbb{N}}{\text{colim}}\ H_*(Aut(F_n), \Lambda^d\circ(\mathfrak{a}\otimes \mathbb{Q})(H_n)) \simeq  \left\lbrace\begin{array}{ll}
   \mathbb{Q}^{\rho(d)}& \text{if } *=d\\
 0 & \text{otherwise}
 \end{array}
 \right.$$
where $\rho(d)$ denotes the number of partitions of $d$, and
$$\underset{n \in \mathbb{N}}{\text{colim}}\ H_*(Aut(F_n), S^d\circ(\mathfrak{a}\otimes \mathbb{Q})(H_n)) \simeq  \left\lbrace\begin{array}{ll}
   \mathbb{Q}& \text{if } *=d=0 \\
    & \text{or\ } *=d=1\\
 0 & \text{otherwise}
 \end{array}
 \right.$$

\end{thm-i}

The two last results of the previous proposition were conjectured by Randal-Williams in \cite{RW}, which motivated the present work. More precisely, combining this proposition with the stable range of such homology groups obtained by Randal-Williams and Wahl in \cite{RWW} we deduce the conjecture of Randal-Williams  in \cite[Corollary 6.4]{RW}. The previous proposition has also been obtained in recent work of Randal-Williams \cite{RW2} using independent topological methods. Proposition \ref{prop-intro} is a partial answer to Problem 17 asked by Morita in \cite{Morita}. 

The final section also contains the following proposition used in the application section of \cite{DPV}.
Let $q_n$ be the left adjoint of the inclusion functor from polynomial functors of degree $\leq n$ to $\F(\gr)$ and $\bar{P}$ in $\F(\gr)$ such that $\Z[\gr(\Z,-)] \simeq \Z \oplus \bar{P}$, we have: 

\begin{prop-i}
Let $m$ and $n$ be integers such that $m\geq n> 0$. We have:
$$Ext^{m-n}_{\F(\gr)}(q_{n}(\bar{P}), T^m \circ \abe) \neq 0.$$
\end{prop-i}

\textbf{Notation}: We denote by $\Gr$ the category of groups, $\Ab$ the category of abelian groups, $\gr$ (resp. $\ab$), a small skeleton of the full subcategory of $\Gr$ (resp. $\Ab$),  having as objects finitely generated free objects. For example, we can take $\gr$ to be the full subcategory with objects the groups $\mathbb{Z}^{* n}$ for $n \in \mathbb{N}$ where $*$ is the free product.

We denote by $\mathfrak{a}$ the abelianization functor from $\gr$ to $\ab$, so that $\abe(\mathbb{Z}^{* n}) =\mathbb{Z}^{\oplus n}$.

Let $\C$ be a small pointed category. We denote by $\F(\C)$ the category of functors from $\C$ to $\Ab$. This category is abelian and has enough projective and injective objects.
A \textit{reduced} functor $F \in \F(\C)$ satisfies $F(0)=0$. 

For $n \in \mathbb{N}$, we denote by $P_n$  the representable functor $P_n:=\Z[\gr(\Z^{*n}, -)]$ in $\F(\gr)$.
By the Yoneda lemma we have a canonical isomorphism $Hom_{\F(\gr)}(P_n, F) \simeq F(\Z^{*n})$.

The Eilenberg-MacLane notion of polynomial functors \cite{EMcL} applies to the category $\F(\gr)$.
For $d \in \mathbb{N}$ we denote by $\F_d(\gr)$ the full subcategory of $\F(\gr)$ of polynomial functors of degree $\leq d$ and $i_d: \F_d(\gr) \to \F(\gr)$ the inclusion functor. This functor has a left adjoint denoted by $q_d$. The functors $q_i(\bar{P})$ are called Passi functors (see \cite{HPV} and \cite{DPV}). For $G \in \gr$ we have a natural isomorphism $q_i(\bar{P})(G) \simeq IG/I^{i+1}G$, where $IG$ is the augmentation ideal (see, for example \cite[Proposition 3.7]{DPV}).

In the following, we sometimes write $Ext^*(F,G)$ instead of $Ext^*_{\F(\gr)}(F,G)$ to shorten formulas.

We denote by  $g\textbf{Ab}^-$ the symmetric monoidal category whose objects are graded abelian groups $V^\bullet=\underset{i \in \mathbb{Z}}{\bigoplus} V^i$ and morphisms are linear maps preserving the grading. The monoidal structure is given by the graded tensor product. The commutativity isomorphism $V^\bullet \otimes W^\bullet \to W^\bullet \otimes V^\bullet$ is defined by:
$$v \otimes w \mapsto (-1)^{|v|.|w|} w \otimes v.$$

\section{Computation of $Ext^{*}_{\F(\gr)}(\abe, T^m \circ \abe)$}

This section is based on the existence of an explicit projective resolution of $\abe$ in $\F(\gr)$. This resolution occurs in \cite[Proposition 5.1]{JP} and plays a crucial r\^ole in \cite{DV2}.

Consider the following simplicial object in $\F(\gr)$:

\begin{equation}\label{simplicial}
\xymatrix{
 \ldots \ar@<+0.7ex>[r]  \ar@{}[r]|-{\ldots}\ar@<-0.7ex>[r] & P_{n+1} \ar@<+0.7ex>[r] \ar@{}[r]|-{\ldots} \ar@<-0.7ex>[r]
&  P_n \ar@<+0.7ex>[r] \ar@{}[r]|-{\ldots}  \ar@<-0.7ex>[r]&  \ldots \ar@<+0.7ex>[r] \ar@<-0.7ex>[r]\ar@<+0.2ex>[r] \ar@<-0.2ex>[r]&  P_2 \ar@<+0.7ex>[r] \ar[r]\ar@<-0.7ex>[r]&  P_1 \ar@<+0.4ex>[r] \ar@<-0.4ex>[r]&  P_0
}
 \end{equation}
 where $\delta_i: P_{n+1} \to P_n$ for $0 \leq i \leq n+1$ are defined by:
$$\delta_0[g_1, g_2, \ldots, g_n, g_{n+1}]=[g_2, \ldots, g_n, g_{n+1}]$$
$$\delta_i[g_1, g_2, \ldots, g_n, g_{n+1}]=[g_1, \ldots, g_ig_{i+1}, \ldots, g_n, g_{n+1}] \text{\quad for} \quad 1 \leq i\leq n$$
$$\delta_{n+1}[g_1, g_2, \ldots, g_n, g_{n+1}]=[g_1, g_2, \ldots, g_n]$$
and $\epsilon_i: P_{n} \to P_{n+1}$ for $1 \leq i \leq n+1$ are defined by:
$$\epsilon_i[g_1, \ldots, g_n]=[g_1, \ldots, g_{i-1}, 1, g_i, \ldots, g_n].$$
We denote by $C_\bullet$ the unnormalized chain complex associated to this simplicial object and $D_\bullet$ the complex defined by $D_i=C_{i+1}$ for $i \geq 0$ and $D_i=0$ for $i<0$.

Recall that the homology of a free group is naturally isomorphic to its abelianization in degree $1$ and is zero in degree $>1$. So $D_\bullet$ is a resolution of $\abe$ and we obtain: 

\begin{prop}(Cf. \cite[Proposition 5.1]{JP})
The exact sequence in $\F(\gr)$:
$$ \ldots P_{n+1} \xrightarrow{d_n} P_n \to \ldots \to P_2 \xrightarrow{d_1} P_1$$
is a projective resolution of the abelianization functor $\abe: \gr \to \Ab$.
The natural transformation $d_n: P_n \to P_{n-1}$ is given on a group $G \in \gr$ by the linear map $\Z[G^{n+1}] \to \Z[G^n]$ such that:
$$d_n([g_1, \ldots, g_{n+1}])=[g_2, \ldots, g_{n+1}]+$$
$$\overset{n}{\underset{i=1}{\sum}}(-1)^{i}[g_1, \ldots, g_{i-1}, g_ig_{i+1}, g_{i+2}, \ldots, g_{n+1}]+(-1)^{n+1}[g_1, \ldots, g_n]$$
for all $(g_1, \ldots, g_{n+1}) \in G^{n+1}$.
\end{prop}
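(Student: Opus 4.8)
The plan is to evaluate the whole simplicial object at a fixed group $G \in \gr$ and recognise the resulting complex as the standard tool computing the integral homology of $G$. By the Yoneda lemma, $P_n(G) = \Z[\gr(\Z^{*n}, G)] = \Z[G^n]$, the second identification coming from the fact that $\Z^{*n}$ is free on $n$ generators, so a homomorphism $\Z^{*n} \to G$ is precisely an $n$-tuple of elements of $G$. Under this identification the face maps $\delta_i$ and degeneracies $\epsilon_i$ written above are exactly those of the inhomogeneous bar construction $B_\bullet(G)$ computing $H_*(G;\Z)$; moreover these maps are natural in $G$, so the simplicial object genuinely lives in $\F(\gr)$. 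The formula for the differential is then immediate: the boundary of the associated chain complex is the alternating sum $\sum_{i=0}^{n+1}(-1)^i \delta_i \colon P_{n+1}\to P_n$, and substituting the three families $\delta_0$, $\delta_i$ for $1\le i\le n$, and $\delta_{n+1}$ produces the three groups of terms in the stated expression for $d_n$.

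Next I would compute the homology. Since exactness in the functor category $\F(\gr)$ is tested objectwise, it suffices to work at each $G$, and every object of $\gr$ is a free group. A free group admits a one-dimensional classifying space (a wedge of circles), so its integral homology is $H_0(G)=\Z$, $H_1(G)=\abe(G)$, and $H_i(G)=0$ for $i\ge 2$, all naturally in $G$. In the bar complex the bottom differential $B_1\to B_0$ vanishes, so the complex decouples: the constant term $B_0=\Z$ accounts for $H_0$, while the truncation $\cdots\to B_2\to B_1\to 0$ has $i$-th homology equal to $H_{i+1}(G)$. Reindexing this truncation as $D_\bullet$, that is $D_i = B_{i+1} = P_{i+1}$, gives $H_i(D_\bullet)(G)=H_{i+1}(G)$, which is $\abe(G)$ for $i=0$ and $0$ for $i\ge 1$. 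Concretely $H_0(D_\bullet)=\mathrm{coker}(d_1)$ is $\Z[G]$ modulo the relations $[g_1 g_2]=[g_1]+[g_2]$, which is the standard presentation of the abelianization, and naturality of all these identifications supplies the augmentation $P_1\to\abe$ in $\F(\gr)$.

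Finally, each $P_n$ is projective in $\F(\gr)$: by the Yoneda isomorphism $\Hom_{\F(\gr)}(P_n,-)$ is the evaluation functor $F\mapsto F(\Z^{*n})$, which is exact. Hence the acyclic complex above is a \emph{projective} resolution of $\abe$. I do not expect a genuine obstacle here — the content is the assembly of three standard inputs (Yoneda, the bar construction, and the vanishing of the higher homology of free groups). The only points demanding care are bookkeeping: checking that the alternating sum reproduces the stated signs, tracking the degree shift relating $H_i(D_\bullet)$ to $H_{i+1}(G)$, and verifying that each identification, especially $\mathrm{coker}(d_1)\cong\abe$, is natural in $G$, so that one obtains a resolution in $\F(\gr)$ rather than merely a pointwise statement.
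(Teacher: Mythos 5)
Your proposal is correct and follows essentially the same route as the paper, which simply observes that the simplicial object is the bar construction on $P_\bullet$, invokes the vanishing of the homology of free groups above degree $1$ together with $H_1(G)\simeq\abe(G)$, and cites \cite[Proposition 5.1]{JP}; your write-up just makes the Yoneda identification $P_n(G)\simeq\Z[G^n]$, the degree shift $D_i=C_{i+1}$, and the naturality explicit. No gaps.
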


For $F\in \F(\gr)$ and $n$ an integer,
we have $$Ext^n_{\F(\gr)}(\abe, F)\simeq \pi^{n+1}(F(\Z^{*\bullet}))\simeq H^{n+1}(NF(\Z^{*\bullet}))$$
where $F(\Z^{*\bullet})$ is the cosimplicial abelian group obtained by applying $Hom_{\F(\gr)}(-,F)$ to the simplicial object (\ref{simplicial}) and using the natural isomorphism $Hom_{\F(\gr)}(P_n,F) \simeq F(\Z^{*n})$ given by the Yoneda lemma and $NF(\Z^{*\bullet})$ is the normalized cochain complex.
\begin{rem} \label{cas0}
A straightforward calculation gives that
$$Ext^{*}_{\F(\gr)}( \abe, T^0 \circ \abe)\simeq Ext^{*}_{\F(\gr)}( \abe, \mathbb{Z})=0.$$
\end{rem}
\begin{prop} \label{Ext-ab}
Let $m \geq 1$ be a natural integer, we have an isomorphism:
$$Ext^{*}_{\F(\gr)}(\abe, T^m \circ \abe) \simeq \left\lbrace\begin{array}{ll}
 \Z & \text{if } *=m-1\\
 0 & \text{otherwise}
 \end{array}
 \right.$$
 \end{prop}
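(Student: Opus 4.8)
The plan is to compute directly the cohomology of the cochain complex furnished by the formula preceding Remark~\ref{cas0}. Setting $F = T^m \circ \abe$, one has $F(\Z^{*n}) = (\Z^n)^{\otimes m}$, so I would first record that the cosimplicial abelian group $F(\Z^{*\bullet})$ is obtained by applying the levelwise $m$-th tensor power $T^m$ to the cosimplicial abelian group $A^\bullet := \abe(\Z^{*\bullet})$, where $A^n = \Z^n$. The coface and codegeneracy maps of $A^\bullet$ are read off from the maps $\delta_i$ and $\epsilon_i$ of the simplicial object (\ref{simplicial}) through the Yoneda isomorphism $Hom_{\F(\gr)}(P_n, F) \simeq F(\Z^{*n})$: after abelianization each $\delta_i$ becomes an explicit $\Z$-linear map $\Z^n \to \Z^{n+1}$ (for instance $d^0(e_j) = e_{j+1}$, $d^i(e_i) = e_i + e_{i+1}$, $d^{n+1}(e_j) = e_j$). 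This bookkeeping, though routine, is the step that must be done carefully, since the precise coface maps and signs feed every subsequent computation.

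The heart of the argument is the case $m = 1$, i.e.\ the computation of $H^*(N A^\bullet)$. Here I would write out the differential $\partial^n = \sum_{i=0}^{n+1}(-1)^i d^i$ and observe that $\partial^n(e_j)$ lies in the span of $e_j$ and $e_{j+1}$, with coefficients depending only on the parities of $n$ and $j$. A direct analysis of the resulting ``bidiagonal'' differential, or equivalently the construction of a contracting homotopy away from the bottom degree, then shows that $N A^\bullet$ is acyclic except in degree $1$, where its cohomology is $\Z$. This already yields $Ext^*(\abe, \abe) = \Z$ for $* = 0$ and $0$ otherwise, the case $n=m=1$ of the main theorem.

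To pass from $m = 1$ to arbitrary $m$, I would use that $F(\Z^{*\bullet}) = T^m(A^\bullet)$ is the diagonal of the $m$-fold external tensor product of $A^\bullet$ with itself. The dual (cosimplicial) Eilenberg--Zilber theorem then gives a quasi-isomorphism between the conormalization $N\bigl(T^m A^\bullet\bigr)$ and the tensor product $(N A^\bullet)^{\otimes m}$ of cochain complexes. Since $N A^\bullet$ is a bounded-below complex of free abelian groups with cohomology the free group $\Z$ concentrated in degree $1$, the K\"unneth formula has no $\mathrm{Tor}$ contributions and gives $H^*\bigl((N A^\bullet)^{\otimes m}\bigr) = \Z$ concentrated in degree $m$. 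Combining this with $Ext^*(\abe, F) \simeq H^{*+1}(N F(\Z^{*\bullet}))$ yields $Ext^{m-1}(\abe, T^m \circ \abe) = \Z$ and $0$ in all other degrees, as claimed.

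The step I expect to be the main obstacle is twofold. First, one must pin down the explicit conormalized complex $N A^\bullet$ and prove its acyclicity outside degree $1$ in all degrees simultaneously (not merely in low degrees), which requires either a clean change of basis or an explicit homotopy rather than the sample low-degree checks. Second, one must justify the Eilenberg--Zilber reduction in the cosimplicial setting, so that the non-additivity of $T^m$ does not obstruct the identification of cohomology; this is the conceptual crux. Once these two points are secured, the K\"unneth computation and the degree bookkeeping are formal.
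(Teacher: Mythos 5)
Your proposal is correct and follows essentially the same route as the paper: both identify $Ext^n_{\F(\gr)}(\abe,F)$ with $H^{n+1}$ of the normalized cochain complex coming from the explicit projective resolution, compute the base case $Ext^*(\abe,\abe)=\Z$ directly from that complex, and then handle tensor powers via the cosimplicial Eilenberg--Zilber theorem together with the K\"unneth formula (the paper packages the latter step as a two-factor lemma applied inductively, whereas you apply the $m$-fold version at once, which is only an organizational difference). The point you flag as the conceptual crux --- that $T^m$ is applied levelwise to the cosimplicial abelian group rather than to a chain complex, so Eilenberg--Zilber applies without any issue of non-additivity --- is exactly how the paper's Lemma on $Ext^*(\abe,F\otimes G)$ resolves it.
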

 The proof of this proposition is based on the following lemma.
 
 \begin{lm} \label{lm-simplicial}
 If $F: \gr \to \Ab$ and $G: \gr \to \Ab$ are reduced functors, then there is a graded morphism
 $$Ext^*_{\F(\gr)}(\abe,F) \otimes Ext^*_{\F(\gr)}(\abe, G) \to Ext^{*+1}_{\F(\gr)}(\abe, F \otimes G)
$$
 which is an isomorphism if the values of $F$ and $Ext^*_{\F(\gr)}(\abe, F)$ are torsion free, 
 \end{lm}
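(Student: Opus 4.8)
The plan is to translate the statement into the cohomology of cosimplicial abelian groups and then combine the Eilenberg--Zilber theorem with the algebraic Künneth formula. Write $A^\bullet = F(\Z^{*\bullet})$ and $B^\bullet = G(\Z^{*\bullet})$ for the cosimplicial abelian groups obtained by applying $\Hom_{\F(\gr)}(-,F)$ and $\Hom_{\F(\gr)}(-,G)$ to the simplicial object (\ref{simplicial}). By the identification recalled before Remark \ref{cas0} we have $Ext^n_{\F(\gr)}(\abe, F) \simeq H^{n+1}(NA^\bullet)$, and similarly for $G$, where $N$ denotes the normalized cochain complex. Since the tensor product of functors is computed objectwise, $(F\otimes G)(\Z^{*\bullet}) = A^\bullet \otimes B^\bullet$ is the pointwise (diagonal) tensor product of cosimplicial abelian groups, whence $Ext^n_{\F(\gr)}(\abe, F\otimes G) \simeq H^{n+1}(N(A^\bullet\otimes B^\bullet))$. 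As $F$ and $G$ are reduced, $A^0 = B^0 = 0$, so all three normalized complexes are concentrated in degrees $\geq 1$; this is what will produce the degree shift in the statement.

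To build the map I would use the shuffle (Eilenberg--Zilber) morphism, which makes the conormalization of cosimplicial abelian groups a lax symmetric monoidal functor: it is a natural chain map $\nabla\co NA^\bullet \otimes NB^\bullet \to N(A^\bullet\otimes B^\bullet)$. Composing the Künneth map $H^*(NA^\bullet)\otimes H^*(NB^\bullet)\to H^*(NA^\bullet\otimes NB^\bullet)$ with the map induced by $\nabla$ gives $H^*(NA^\bullet)\otimes H^*(NB^\bullet) \to H^*(N(A^\bullet\otimes B^\bullet))$. A class in $Ext^p(\abe,F)$ lives in $H^{p+1}(NA^\bullet)$ and a class in $Ext^q(\abe,G)$ in $H^{q+1}(NB^\bullet)$, so their image lies in $H^{(p+1)+(q+1)}(N(A^\bullet\otimes B^\bullet)) = H^{p+q+2} \simeq Ext^{p+q+1}(\abe, F\otimes G)$. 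This is exactly the graded morphism $Ext^*\otimes Ext^* \to Ext^{*+1}$ of the statement; since we want the later product to be graded-commutative for the operad/PROP structure, the symmetric shuffle map (rather than Alexander--Whitney) is the correct choice.

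For the isomorphism I would argue in two steps. By the Dold--Kan splitting the conormalized group $N^n A^\bullet$ is a natural direct summand of $A^n = F(\Z^{*n})$, which is torsion free by hypothesis; hence $NA^\bullet$ is a complex of torsion free, thus flat, abelian groups. The Eilenberg--Zilber theorem guarantees that $\nabla$ is a chain homotopy equivalence, so the map $H^*(NA^\bullet\otimes NB^\bullet)\to H^*(N(A^\bullet\otimes B^\bullet))$ it induces is an isomorphism. It then remains to show that the Künneth map $H^*(NA^\bullet)\otimes H^*(NB^\bullet)\to H^*(NA^\bullet\otimes NB^\bullet)$ is an isomorphism. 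Over the PID $\Z$, with $NA^\bullet$ termwise flat, the algebraic Künneth theorem gives a short exact sequence whose defect is a sum of terms $\mathrm{Tor}_1^{\Z}(H^p(NA^\bullet), H^q(NB^\bullet))$; the hypothesis that $Ext^*_{\F(\gr)}(\abe, F)\simeq H^{*+1}(NA^\bullet)$ is torsion free forces these to vanish, so the Künneth map, and hence the whole composite, is an isomorphism.

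The main obstacle I anticipate is not the Künneth step, which is routine, but the careful handling of the shuffle map in the cosimplicial rather than simplicial setting: one must check that $\nabla$ is the correct natural transformation, that it interacts properly with the degree shift $Ext^n = H^{n+1}$, and that its symmetry carries the Koszul sign of $g\textbf{Ab}^-$. Verifying naturality of the whole construction in $F$ and $G$, so that it can later be matched with the Yoneda and external products, is where most of the bookkeeping will lie.
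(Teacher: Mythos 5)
Your proposal is correct and follows essentially the same route as the paper: identify $Ext^n(\abe,-)$ with $H^{n+1}$ of the conormalized complex, observe that $(F\otimes G)(\Z^{*\bullet})$ is the pointwise tensor product, apply Eilenberg--Zilber to compare $N(A^\bullet\otimes B^\bullet)$ with $NA^\bullet\otimes NB^\bullet$, and conclude with the K\"unneth short exact sequence, whose $\mathrm{Tor}$ term vanishes under the stated torsion-freeness hypotheses, with reducedness killing the degree-$0$ contributions and producing the shift. The only (immaterial) difference is that you take the shuffle map $NA^\bullet\otimes NB^\bullet\to N(A^\bullet\otimes B^\bullet)$ where the paper uses the quasi-isomorphism in the opposite direction.
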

 \begin{proof}
 Considering the complexes $NF(\Z^{*\bullet})$, $NG(\Z^{*\bullet})$ and $NF(\Z^{*\bullet}) \otimes NG(\Z^{*\bullet})$ we have a graded morphism:
 \begin{equation} \label{morphisme}
 H^*(NF(\Z^{*\bullet})) \otimes H^*(NG(\Z^{*\bullet})) \to H^*(NF(\Z^{*\bullet}) \otimes NG(\Z^{*\bullet})).
 \end{equation}

 Let $(F \otimes G)(\Z^{*\bullet})$ be the cosimplicial abelian group obtained when we apply $Hom_{\F(\gr)}(-,F\otimes G)$ to the simplicial object (\ref{simplicial}). We have
 $$Hom_{\F(\gr)}(P_n, F \otimes G) \simeq (F \otimes G)(\Z^{*n})=F(\Z^{*n}) \otimes G(\Z^{*n}).$$
 So $(F \otimes G)(\Z^{*\bullet})=F(\Z^{*\bullet}) \otimes G(\Z^{*\bullet})$ where $\otimes$ denote the tensor product of cosimplicial abelian groups. By the Eilenberg-Zilber theorem, the map
 $$N(F(\Z^{*\bullet}) \otimes G(\Z^{*\bullet})) \to NF(\Z^{*\bullet}) \otimes NG(\Z^{*\bullet})$$
 is a quasi-isomorphism. So, we have isomorphisms
 $$Ext^{n}(\abe, F \otimes G) \simeq H^{n+1}\big(N(F(\Z^{*\bullet}) \otimes G(\Z^{*\bullet}))\big) \simeq H^{n+1}(NF(\Z^{*\bullet}) \otimes NG(\Z^{*\bullet}))$$
 and hence the stated morphisms.
  Suppose now that  the values of $F$ are torsion free, by the K\"unneth formula (see \cite{Weibel} Theorem 3.6.3), the graded morphism (\ref{morphisme}) is part of a natural short exact sequence
   {\footnotesize{$$0 \to H^*(NF(\Z^{*\bullet})) \otimes H^*(NG(\Z^{*\bullet})) \to H^*(NF(\Z^{*\bullet}) \otimes NG(\Z^{*\bullet})) \to Tor_{\Z}^1(Ext^*(\abe, F), Ext^*(\abe, G)) \to 0$$}}
   Hence, if the groups $Ext^*_{\F(\gr)}(\abe, F)$ are torsion free, we have an isomorphism
   $$H^*(NF(\Z^{*\bullet})) \otimes H^*(NG(\Z^{*\bullet})) \simeq H^*(NF(\Z^{*\bullet}) \otimes NG(\Z^{*\bullet})).$$
   We deduce:
  $$ \begin{array}{l}
   Ext^{n+1}(\abe, F \otimes G) = H^{n+2}(NF(\Z^{*\bullet}) \otimes NG(\Z^{*\bullet})) \simeq \underset{i+j=n+2}{\bigoplus}H^i(NF(\Z^{*\bullet})) \otimes H^j(NG(\Z^{*\bullet}))\\
   =\underset{\begin{array}{c} i+j=n+2 \\ i \neq 0, j \neq 0 \end{array}}{\bigoplus}H^i(NF(\Z^{*\bullet})) \otimes H^j(NG(\Z^{*\bullet}))\quad  \text{\ since $F$ and $G$ are reduced}\\
   =\underset{\begin{array}{c} i+j=n+2 \\ i \neq 0, j \neq 0 \end{array}}{\bigoplus}Ext^{i-1}(\abe, F) \otimes Ext^{j-1}(\abe, G)\\
   =\underset{ p+q=n }{\bigoplus}Ext^{p}(\abe, F) \otimes Ext^{q}(\abe, G).
    \end{array}$$
   \end{proof}

\begin{proof}[Proof of Proposition \ref{Ext-ab}]
We prove the result by induction on $m$. To start the induction, $Ext^{*}_{\F(\gr)}(\abe, \abe)$ is the homology of the complex:
$$\xymatrix{\ldots & \Z^{n+1} \ar[l] & \Z^{n} \ar[l]_-{\delta_n} & \ldots \ar[l]&\Z^{2} \ar[l]& \Z \ar[l]_-{\delta_1}}$$
which is trivial for $*\geq 1$ and is isomorphic to $\Z$ for $*=0$. In particular  $Ext^{*}_{\F(\gr)}(\abe, \abe)$ is torsion free. 
Assume that the statement is true for $m$. Applying  Lemma \ref{lm-simplicial} to $T^m \circ \abe$  and $\abe$ which takes torsion free values, we have a graded isomorphism:
$$Ext^{*+1}_{\F(\gr)}(\abe, T^{m+1} \circ \abe)\simeq Ext^{*}_{\F(\gr)}(\abe, T^{m} \circ \abe) \otimes Ext^{*}_{\F(\gr)}(\abe, \abe) $$
and we obtain the result by the inductive step and the computation of $Ext^{*}_{\F(\gr)}(\abe, \abe)$.

\end{proof}

\begin{rem}Proposition \ref{Ext-ab} is used in \cite{DPV} to prove that the global dimension of the category of polynomial functors of degree $\leq m$ from $\textbf{gr}$ to the category of $\mathbb{Q}$-vector spaces has global dimension $m-1$ (see \cite[Proposition 4.6]{DPV}).
\end{rem}

The symmetric group $\mathfrak{S}_m$ acts on $T^m$ by permuting the factors of the tensor product. This action induces an action of $\mathfrak{S}_m$ on the extension groups $Ext^{m-1}_{\F(\gr)}(\abe, T^m \circ \abe)$ that we make explicit in the following proposition.

\begin{prop} \label{action1}
The action of $\mathfrak{S}_m$ on  $Ext^{m-1}_{\F(\gr)}(\abe, T^m \circ \abe)$ is given by the sign representation.
\end{prop}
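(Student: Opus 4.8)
The plan is to track the action of a transposition through the isomorphism of Proposition \ref{Ext-ab}, and to show that the sign $-1$ is forced by the graded (Koszul) nature of the Eilenberg--Zilber identification used in Lemma \ref{lm-simplicial}. First I would make the following reduction. Since $Ext^{m-1}_{\F(\gr)}(\abe, T^m \circ \abe) \cong \Z$ by Proposition \ref{Ext-ab}, the $\mathfrak{S}_m$-action on $T^m$ induces a group homomorphism $\rho \co \mathfrak{S}_m \to \mathrm{Aut}(\Z) = \{\pm 1\}$. For $m \geq 2$ one has $\mathfrak{S}_m^{\mathrm{ab}} \cong \Z/2$, generated by the class of any transposition, so $\rho$ factors through the abelianization and is therefore either trivial or the sign representation; in particular it suffices to evaluate $\rho$ on a single transposition, say $\tau = (1\,2)$. (The cases $m = 0, 1$ are vacuous.)

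Next I would unwind the isomorphism. Iterating Lemma \ref{lm-simplicial} identifies $Ext^{m-1}_{\F(\gr)}(\abe, T^m \circ \abe)$ with $H^m$ of the $m$-fold tensor product $N\abe(\Z^{*\bullet})^{\otimes m}$, the generator corresponding to $x^{\otimes m}$, where $x$ generates $Ext^0_{\F(\gr)}(\abe, \abe) = H^1(N\abe(\Z^{*\bullet}))$. The point I want to stress is that the degree shift $Ext^n_{\F(\gr)}(\abe, -) = H^{n+1}$ used throughout places $x$ in cochain degree $1$, which is \emph{odd}, even though its $Ext$-degree is $0$. Thus the generator of $Ext^{m-1}_{\F(\gr)}(\abe, T^m \circ \abe)$ is represented, through Eilenberg--Zilber, by a product of $m$ classes each of odd cochain degree.

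I would then compute the swap. The transposition $\tau$ permutes the first two tensor factors of $T^m \circ \abe = \abe^{\otimes m}$, which at the level of cosimplicial abelian groups is the levelwise, \emph{unsigned} flip of the first two factors of $\abe(\Z^{*\bullet})^{\otimes m}$. Under the Eilenberg--Zilber equivalence the shuffle map intertwines this unsigned levelwise flip with the Koszul-signed flip on the normalized complexes $N\abe(\Z^{*\bullet})^{\otimes m}$, up to natural chain homotopy. Applied to $x^{\otimes m}$, interchanging two factors each of degree $1$ produces the Koszul sign $(-1)^{1\cdot 1} = -1$, the remaining factors being fixed. Hence $\rho(\tau) = -1$, so $\rho$ is the sign representation, as claimed.

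The main obstacle is the careful justification of the compatibility of the Eilenberg--Zilber equivalence with the symmetry isomorphisms, i.e.\ that the levelwise flip of cosimplicial tensor factors corresponds on cohomology to the Koszul-signed flip; this is exactly where the sign is created. The conceptual content is that the sign is not an accident of the action on $T^m$ but an artifact of the shift $Ext^n_{\F(\gr)}(\abe,-) = H^{n+1}$, which pushes the relevant classes into odd cochain degree and thereby activates the Koszul sign rule.
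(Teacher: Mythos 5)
Your proposal is correct and follows essentially the same route as the paper: the paper likewise identifies $Ext^{m-1}_{\F(\gr)}(\abe, T^m \circ \abe)$ with $\bigotimes_{i=1}^{m}Ext^{*}_{\F(\gr)}(\abe,\abe)$ via iterated use of Lemma \ref{lm-simplicial} (i.e.\ Eilenberg--Zilber plus K\"unneth) and then extracts the sign from the Koszul symmetry of the tensor product of complexes (Lemma \ref{signe}), the sign arising precisely because the generator of $Ext^{0}_{\F(\gr)}(\abe,\abe)$ sits in cochain degree $1$. Your additional reduction to a single transposition via $\mathfrak{S}_m^{\mathrm{ab}}\cong\Z/2$ is a harmless streamlining, not a different method.
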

The proof of this proposition relies on the following well-known lemma (see \cite[X 62]{Bourbaki}).
\begin{lm} \label{signe}
Let $A$ be a commutative ring, $(C,d)$ and $(C', d')$ be complexes of $A$-modules. The map $\sigma(C,C'): C \underset{A}{\otimes} C' \to C' \underset{A}{\otimes} C$ given by $\sigma(C,C')(x \otimes x')=(-1)^{|x|\ |x'|}x' \otimes x$ is an isomorphism of complexes.
\end{lm}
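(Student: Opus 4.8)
The plan is to verify directly that $\sigma(C,C')$ commutes with the differentials on the two tensor-product complexes, and then to exhibit an explicit inverse. First I would recall that the tensor product $C \otimes_A C'$ of complexes carries the Koszul differential $D(x \otimes x') = dx \otimes x' + (-1)^{|x|} x \otimes d'x'$ on homogeneous elements, and that $C' \otimes_A C$ carries the analogous differential $D'(x' \otimes x) = d'x' \otimes x + (-1)^{|x'|} x' \otimes dx$. Since $\sigma(C,C')$ is $A$-linear and the modules are generated over $A$ by homogeneous tensors, it suffices to check the chain-map identity $\sigma(C,C') \circ D = D' \circ \sigma(C,C')$ on a single homogeneous tensor $x \otimes x'$ with $|x| = p$ and $|x'| = q$.

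The core of the argument is then purely sign bookkeeping. Applying $\sigma(C,C')$ to $D(x \otimes x')$ yields the two terms $(-1)^{(p-1)q}\, x' \otimes dx$ and $(-1)^{p+p(q-1)}\, d'x' \otimes x$, where the degree shift $|dx| = p-1$ and $|d'x'| = q-1$ enters the exponents. Applying $D'$ to $\sigma(C,C')(x \otimes x') = (-1)^{pq}\, x' \otimes x$ yields $(-1)^{pq}\, d'x' \otimes x$ together with $(-1)^{pq+q}\, x' \otimes dx$. Comparing term by term and using the parity congruences $(p-1)q \equiv pq+q$ and $p+p(q-1) \equiv pq \pmod 2$ shows the two expressions coincide, so $\sigma(C,C')$ is a morphism of complexes.

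For the isomorphism claim, I would simply observe that the symmetric map $\sigma(C',C) \colon C' \otimes_A C \to C \otimes_A C'$, $x' \otimes x \mapsto (-1)^{|x'|\,|x|}\, x \otimes x'$, is a two-sided inverse: on a homogeneous element the two sign factors compose to $(-1)^{2pq} = 1$, so $\sigma(C',C) \circ \sigma(C,C') = \mathrm{id}$ and symmetrically $\sigma(C,C') \circ \sigma(C',C) = \mathrm{id}$. Combined with the previous paragraph, $\sigma(C,C')$ is an isomorphism of complexes.

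I do not expect any genuine obstacle, since the statement is classical and its entire content is the compatibility of the Koszul sign $(-1)^{|x|\,|x'|}$ with the tensor-product differential. The only point demanding care is tracking the degree shifts induced by the differentials lowering degree by one, which is precisely what the parity congruences above resolve.
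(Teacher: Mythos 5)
Your verification is correct: the sign bookkeeping checks out (both sides reduce to $(-1)^{pq+q}\,x'\otimes dx+(-1)^{pq}\,d'x'\otimes x$), and exhibiting $\sigma(C',C)$ as the two-sided inverse settles the isomorphism claim. The paper itself gives no proof of this lemma, citing only Bourbaki, so your direct computation is exactly the standard argument the citation stands in for; there is nothing to add or correct.
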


\begin{proof}[Proof of Proposition \ref{action1}]
Since $Ext^{*}_{\F(\gr)}(\abe, \abe)$ is torsion free by Proposition \ref{Ext-ab} and $\abe$ has torsion free values, applying iteratively Lemma \ref{lm-simplicial} for $F=\abe$ and $G=\abe$ we obtain that the morphism
$$\psi: \overset{m}{\underset{i=1}{\bigotimes}}Ext^{*}_{\F(\gr)}(\abe, \abe) \to Ext^{*+m-1}_{\F(\gr)}(\abe, T^m \circ \abe)$$
is an isomorphism.

Let $\sigma \in \mathfrak{S}_m$, the action of $\sigma$ on $Ext^{m-1}_{\F(\gr)}(\abe, T^m \circ \abe)$ is given by the following composition where $\phi: \overset{m}{\underset{i=1}{\bigotimes}}Ext^{*}_{\F(\gr)}(\abe, \abe)    \to \overset{m}{\underset{i=1}{\bigotimes}}Ext^{*}_{\F(\gr)}(\abe, \abe) $ is given by the action of $\sigma$ on $\overset{m}{\underset{i=1}{\bigotimes}}Ext^{*}_{\F(\gr)}(\abe, \abe) $ 

$$\xymatrix{
Ext^{*+m-1}_{\F(\gr)}(\abe, T^m \circ \abe) \ar[r]^-{\simeq}_-{\psi^{-1}} & \overset{m}{\underset{i=1}{\bigotimes}}Ext^{*}(\abe, \abe) \ar[r]^-{\phi}& \overset{m}{\underset{i=1}{\bigotimes}}Ext^{*}(\abe, \abe) \ar[r]^-{\simeq}_-{\psi}& Ext^{*+m-1}_{\F(\gr)}(\abe, T^m \circ \abe) 
}$$
By Lemma \ref{signe}, we have
$$\phi (x_1 \otimes \ldots \otimes x_n)= \epsilon(\sigma) x_{\sigma^{-1}(1)} \otimes \ldots \otimes x_{\sigma^{-1}(m)}$$
where $x_k$ is a generator of the $k$-th copy of $Ext^{*}_{\F(\gr)}(\abe, \abe)$ in $\overset{m}{\underset{i=1}{\bigotimes}}Ext^{*}_{\F(\gr)}(\abe, \abe)$.
\end{proof}

\begin{rem}
We denote by $\Lambda(m)[m-1]$ the symmetric sequence of objects of $g\textbf{Ab}^-$ which is equal to the sign representation of $\mathfrak{S}_m$ placed in degree $m-1$ and $0$ in other degrees. It follows from Propositions \ref{Ext-ab} and \ref{action1} that $Ext^{*}_{\F(\gr)}(\abe, T^m \circ \abe)=\Lambda(m)[m-1]$.
\end{rem}

\section{Computation of $Ext^{*}_{\F(\gr)}(T^n \circ \abe, T^m \circ \abe)$}
This section is based on the sum-diagonal adjunction, an exponential type property of tensor powers and the K\"unneth formula. 

We begin this section with some recollections; we refer the reader to \cite[Appendice B]{DV} for more details.

For $F$ and $G$ in $\F(\gr)$, their external tensor product $F \boxtimes G$ is the functor sending $(X,Y)$ to $F(X) \otimes G(Y)$. This yields a functor:
$$-\boxtimes -:  \F(\gr) \times  \F(\gr) \to  \F(\gr \times \gr).$$

We denote by $\pi_d: \gr ^{\times d} \to \gr$ the functor obtained by iteration of the free product (which is the categorical sum in $\gr$) and $\delta_d: \gr \to \gr^{\times d}$ the diagonal functor. The functor $\delta_d$ is right adjoint of the functor $\pi_d$. We deduce that the functor given by precomposition  $\delta_d^*: \F(\gr^{ \times d}) \to \F(\gr)$ is left adjoint of the functor given by precomposition $\pi_d^*: \F(\gr) \to \F(\gr^{\times d})$. 

A graded exponential functor of $\F(\gr)$ is a sequence $E^\bullet=(E^n)_{n\in \mathbb{N}}$ of objects of $\F(\gr)$ taking finite dimensional values together with natural isomorphisms $E^0 \simeq \Z$ and $\pi_2^*(E^m) \simeq \underset{i+j=m}{\bigoplus}E^{i}\boxtimes E^{j}$.
The tensor power (graded) functor $T^\bullet$ is not exponential but we have a similar property which is useful in the computations below:
\begin{equation} \label{exponential}
\pi_n^*(T^m \circ \abe) \simeq \underset{i_1+ \ldots+i_n=m}{\bigoplus}(T^{i_1} \circ \abe \boxtimes \ldots \boxtimes T^{i_n} \circ \abe) \underset{\mathfrak{S}_{i_1}\times \ldots \times \mathfrak{S}_{i_n}}{\otimes}\Z[\mathfrak{S}_m]
\end{equation}
where $T^0 \circ \abe \simeq \mathbb{Z}$.

The following lemmas will be useful below.
\begin{lm} \label{map-Omega}
Let $Surj(m,n)$ be the set of surjections from the set having $m$ elements to the one having $n$ elements. 
For $f \in Surj(m,n)$ such that for all $k \in \{1, \ldots, n\}$, $|f^{-1}(k) |=i_k$ there are unique maps $\alpha \in Surj(n,n)$ and $s \in Surj(m,n)$ such that $f=s \circ \alpha$ where $\alpha$ is the inverse of a $(i_1, \ldots, i_n)$-shuffle and $s$ is an order preserving surjection.
\end{lm}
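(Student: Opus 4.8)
The plan is to prove Lemma \ref{map-Omega} by constructing the decomposition $f = s \circ \alpha$ explicitly and then verifying uniqueness. First I would fix the surjection $f \in Surj(m,n)$ with fiber sizes $|f^{-1}(k)| = i_k$ for $k \in \{1, \ldots, n\}$, noting that $i_1 + \ldots + i_n = m$. The key idea is that the data of $f$ is equivalent to an \emph{ordered set partition} of $\{1, \ldots, m\}$ into $n$ blocks (the fibers $f^{-1}(1), \ldots, f^{-1}(n)$), and any such ordered partition factors canonically through the ``standard'' partition where the first $i_1$ elements form the first block, the next $i_2$ the second, and so on. Concretely, I would define $\alpha \in Surj(m,n)$ to be the inverse of the $(i_1, \ldots, i_n)$-shuffle, i.e. the order-preserving bijection that rearranges $\{1,\ldots,m\}$ so that the elements of $f^{-1}(1)$ come first in increasing order, then those of $f^{-1}(2)$, etc.; here $\alpha$ records the standard surjection $\{1,\ldots,m\} \to \{1,\ldots,n\}$ sending the first $i_1$ elements to $1$, the next $i_2$ to $2$, and so forth.

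Next I would define $s \colon \{1,\ldots,m\} \to \{1,\ldots,n\}$ as the unique order-preserving surjection with the same fiber sizes read off from $f$; more precisely, $s$ is the order-preserving surjection whose $k$-th fiber has cardinality $i_k$. The verification that $f = s \circ \alpha$ is then a direct check on the block structure: $\alpha$ sorts the elements of $\{1,\ldots,m\}$ according to which fiber of $f$ they belong to (respecting the natural order within each fiber), and composing with the order-preserving $s$ recovers the original assignment. Because $\alpha$ is the inverse of a shuffle, it is an order-preserving bijection on each fiber, which is precisely what forces the factorization to be well defined.

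For uniqueness, I would argue as follows: if $f = s \circ \alpha = s' \circ \alpha'$ with $s, s'$ order-preserving surjections and $\alpha, \alpha'$ inverses of $(i_1,\ldots,i_n)$-shuffles, then comparing fiber sizes forces $s$ and $s'$ to have identical fiber cardinalities, and since an order-preserving surjection is determined by its fiber sizes, we get $s = s'$. Then $\alpha$ and $\alpha'$ must agree as the two decompositions coincide, using that a $(i_1,\ldots,i_n)$-shuffle is the unique order-preserving bijection compatible with the prescribed block decomposition within each block. The main obstacle is keeping the bookkeeping of the two different orderings straight — the natural order on $\{1,\ldots,m\}$ versus the order induced by grouping the fibers — and making the notion of ``$(i_1,\ldots,i_n)$-shuffle'' precise enough that the order-preserving constraint pins down $\alpha$ uniquely. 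I expect this combinatorial bijection argument to be routine once the correspondence between surjections and ordered set partitions is set up cleanly, so I would state that correspondence first and reduce the whole lemma to the standard fact that every permutation factors uniquely as a shuffle followed by a product of permutations of the blocks.
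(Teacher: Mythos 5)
The paper states Lemma \ref{map-Omega} without proof, and your argument is exactly the standard one it implicitly relies on: existence by letting $\alpha^{-1}$ send the $k$-th block of the order-preserving surjection $s$ increasingly onto $f^{-1}(k)$, and uniqueness because the fiber cardinalities determine $s$ while the shuffle condition determines $\alpha^{-1}$ on each block; this is correct. The only things to tidy are notational: $\alpha$ is a permutation of $\{1,\dots,m\}$ (an element of $\mathfrak{S}_m$, not of $Surj(m,n)$ — note the paper's own ``$\alpha \in Surj(n,n)$'' is likewise a typo), ``order-preserving bijection'' should read ``bijection that is order-preserving on each fiber,'' and the parenthetical claiming that $\alpha$ ``records the standard surjection $\{1,\ldots,m\}\to\{1,\ldots,n\}$'' conflates $\alpha$ with $s$.
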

\begin{lm}\label{K2}
For functors $F \in \F(\gr)$ and $G\in \F(\gr)$ there is a graded morphism
 $$Ext^*_{\F(\gr)}(\abe,F) \otimes Ext^*_{\F(\gr)}(\abe, G) \to Ext^{*}_{\F(\gr\times \gr)}(\abe \boxtimes \abe, F \boxtimes G)$$
 which is an isomorphism if the values of $F$ and $Ext^*(\abe, F)$ are torsion free.
\end{lm}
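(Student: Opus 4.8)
The plan is to mirror the proof of Lemma~\ref{lm-simplicial}, replacing the internal tensor product by the external one and working throughout in the product category $\F(\gr \times \gr)$. First I would produce an explicit projective resolution of $\abe \boxtimes \abe$. Since $P_i \boxtimes P_j$ sends $(X,Y)$ to $\Z[\gr(\Z^{*i},X)] \otimes \Z[\gr(\Z^{*j},Y)] \simeq \Z[(\gr\times\gr)((\Z^{*i},\Z^{*j}),(X,Y))]$, it is the representable functor at $(\Z^{*i},\Z^{*j})$ and hence projective in $\F(\gr\times\gr)$. Taking the total complex $\mathrm{Tot}(D_\bullet \boxtimes D_\bullet)$ of the external tensor product of the resolution $D_\bullet$ of $\abe$ with itself therefore yields a complex of projectives. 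To see that it resolves $\abe \boxtimes \abe$ I would evaluate at $(X,Y)$: each term $D_i(X)=\Z[X^{i+1}]$ is a free abelian group, so $D_\bullet(X)\otimes D_\bullet(Y)$ is the tensor product of two complexes of free $\Z$-modules, and the K\"unneth theorem identifies its homology with $\abe(X)\otimes\abe(Y)$ in degree $0$ and with $0$ in higher degrees (no $Tor$ terms, the values being free). This is the step I expect to require the most care, as it is where freeness of the values of the $P_i$ is essential.

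Next I would apply $\Hom_{\F(\gr\times\gr)}(-,F\boxtimes G)$ to this resolution. By the Yoneda lemma in the product category, $\Hom_{\F(\gr\times\gr)}(P_i\boxtimes P_j,F\boxtimes G)\simeq F(\Z^{*i})\otimes G(\Z^{*j})$, so the resulting cochain bicomplex is exactly the external tensor product of the cosimplicial abelian groups $F(\Z^{*\bullet})$ and $G(\Z^{*\bullet})$. Consequently $Ext^*_{\F(\gr\times\gr)}(\abe\boxtimes\abe,F\boxtimes G)$ is the cohomology of the total complex of this bicomplex, and by the Eilenberg--Zilber theorem (normalization) the latter is quasi-isomorphic to $NF(\Z^{*\bullet})\otimes NG(\Z^{*\bullet})$, the tensor product of the normalized cochain complexes computing $Ext^*(\abe,F)$ and $Ext^*(\abe,G)$.

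Finally, the graded K\"unneth map
$$H^*\big(NF(\Z^{*\bullet})\big)\otimes H^*\big(NG(\Z^{*\bullet})\big)\longrightarrow H^*\big(NF(\Z^{*\bullet})\otimes NG(\Z^{*\bullet})\big)$$
supplies the asserted morphism. Since $Ext^n(\abe,F)=H^{n+1}(NF(\Z^{*\bullet}))$ and likewise for $G$, the two simultaneous shifts by $1$ combine with the overall regrading of $\mathrm{Tot}(D_\bullet\boxtimes D_\bullet)$ so that source and target match in each total degree with no net shift (in contrast with Lemma~\ref{lm-simplicial}, where a single shift survives). When the values of $F$ are torsion free, $NF(\Z^{*\bullet})$ is a complex of torsion-free, hence flat over the PID $\Z$, groups with automatically torsion-free boundaries, so the K\"unneth short exact sequence of \cite{Weibel}, Theorem 3.6.3 applies; its error term is $Tor_{\Z}^1(Ext^*(\abe,F),Ext^*(\abe,G))$, which vanishes as soon as $Ext^*(\abe,F)$ is torsion free. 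This yields the isomorphism under exactly the stated hypotheses.
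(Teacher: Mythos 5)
Your proposal is correct and follows essentially the same route as the paper's (very terse) proof: identify $P_A \boxtimes P_B$ as a representable, hence projective, object of $\F(\gr\times\gr)$, use the Yoneda lemma to compute $\Hom_{\F(\gr\times\gr)}(P_A\boxtimes P_B, F\boxtimes G)\simeq F(A)\otimes G(B)$, and conclude by the K\"unneth formula. You merely spell out the details the paper leaves implicit (that $\mathrm{Tot}(D_\bullet\boxtimes D_\bullet)$ resolves $\abe\boxtimes\abe$, and why no degree shift appears here in contrast with Lemma \ref{lm-simplicial}), and these details are handled correctly.
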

\begin{proof}
There is a canonical isomorphism $P_A \boxtimes P_B \simeq \Z[\gr \times \gr \big( (A,B), - \big)]$ so 
$$Hom_{\F(\gr \times \gr)}(P_A\boxtimes P_B, F \boxtimes G) \simeq F(A) \otimes G(B)$$
by the Yoneda lemma.
Then the statement is a consequence of the K\"unneth formula.
\end{proof}

Our first main result is the following theorem.
\begin{thm} \label{Ext}
Let $n$ and $m$ be natural integers, we have an isomorphism:
$$Ext^{*}_{\F(\gr)}(T^n \circ \abe, T^m \circ \abe) \simeq \left\lbrace\begin{array}{ll}
 \Z[Surj(m,n)] & \text{if } *=m-n\\
 0 & \text{otherwise}
 \end{array}
 \right.$$
  where $Surj(m,n)$ is the set of surjections from a set having $m$ elements to a set having $n$ elements. 
\end{thm}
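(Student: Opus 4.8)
The plan is to reduce everything to the already-established computation of $Ext^*_{\F(\gr)}(\abe, T^m\circ\abe)$ from Proposition \ref{Ext-ab} by means of the sum-diagonal adjunction. The starting observation is that the source functor is itself a pullback of an external tensor power: writing $\abe^{\boxtimes n}$ for the functor $\abe\boxtimes\cdots\boxtimes\abe$ on $\gr^{\times n}$, one has $\delta_n^*(\abe^{\boxtimes n})=T^n\circ\abe$, since $\delta_n^*(\abe^{\boxtimes n})(G)=\abe(G)^{\otimes n}$. Because $\pi_n^*$ is a precomposition functor, hence exact, its left adjoint $\delta_n^*$ preserves projectives; applying $\delta_n^*$ to a projective resolution of $\abe^{\boxtimes n}$ therefore produces a projective resolution of $T^n\circ\abe$, and the adjunction $\delta_n^*\dashv\pi_n^*$ descends to an isomorphism
$$Ext^*_{\F(\gr)}(T^n\circ\abe, T^m\circ\abe)\simeq Ext^*_{\F(\gr^{\times n})}(\abe^{\boxtimes n}, \pi_n^*(T^m\circ\abe)).$$
This is the key step, moving the computation into the product category where the target splits.

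Next I would feed the exponential-type decomposition (\ref{exponential}) of $\pi_n^*(T^m\circ\abe)$ into the right-hand side. Since induction $-\otimes_{\mathfrak{S}_{i_1}\times\cdots\times\mathfrak{S}_{i_n}}\Z[\mathfrak{S}_m]$ is exact and $Ext$ commutes with finite direct sums in the second variable, this yields
$$Ext^*_{\F(\gr^{\times n})}(\abe^{\boxtimes n}, \pi_n^*(T^m\circ\abe))\simeq\bigoplus_{i_1+\cdots+i_n=m}\Big(Ext^*_{\F(\gr^{\times n})}(\abe^{\boxtimes n}, T^{i_1}\circ\abe\boxtimes\cdots\boxtimes T^{i_n}\circ\abe)\Big)\underset{\mathfrak{S}_{i_1}\times\cdots\times\mathfrak{S}_{i_n}}{\otimes}\Z[\mathfrak{S}_m].$$
Iterating the K\"unneth isomorphism of Lemma \ref{K2} (applicable since $\abe$ and the $T^{i_k}\circ\abe$ take torsion-free values and the groups $Ext^*(\abe, T^{i_k}\circ\abe)$ are torsion free by Proposition \ref{Ext-ab}) identifies the inner $Ext$ with $\bigotimes_{k=1}^n Ext^*_{\F(\gr)}(\abe, T^{i_k}\circ\abe)$. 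By Proposition \ref{Ext-ab} together with Remark \ref{cas0}, each tensor factor is $\Z$ concentrated in degree $i_k-1$ when $i_k\geq 1$ and is $0$ when $i_k=0$; thus only the compositions with all $i_k\geq 1$ survive, each contributing a copy of $\Z$ placed in total degree $\sum_k(i_k-1)=m-n$. In particular the whole group is concentrated in degree $m-n$, which already gives the vanishing for $*\neq m-n$.

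Finally I would identify the surviving direct sum with $\Z[Surj(m,n)]$. By Proposition \ref{action1} the $\mathfrak{S}_{i_k}$-action on the $k$-th factor is the sign representation, so the inner summand is the sign representation of $\mathfrak{S}_{i_1}\times\cdots\times\mathfrak{S}_{i_n}$, and each term of the direct sum becomes the induced module $\mathrm{Ind}_{\mathfrak{S}_{i_1}\times\cdots\times\mathfrak{S}_{i_n}}^{\mathfrak{S}_m}$ of the sign, free abelian of rank $m!/(i_1!\cdots i_n!)$. Summing over compositions of $m$ into $n$ positive parts recovers exactly $|Surj(m,n)|$; to make the identification of bases canonical rather than merely numerical, I would invoke Lemma \ref{map-Omega}, whose factorization $f=s\circ\alpha$ sets up a bijection between $Surj(m,n)$ and the disjoint union, over order-preserving surjections $s$ (equivalently, over the types $(i_1,\ldots,i_n)$), of the shuffle cosets indexing each induced module.

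The main obstacle I anticipate is bookkeeping rather than conceptual: verifying that the adjunction isomorphism is genuinely valid at the level of $Ext$ (that $\delta_n^*$ preserves projectives and that no higher derived terms intervene), and, more delicately, tracking the signs and the interplay of the symmetric-group actions through the exponential decomposition, the K\"unneth maps, and the induction, so that the resulting $\mathfrak{S}_m$-action and products match the combinatorics of surjections asserted in the theorem and in Propositions \ref{action2} and \ref{products}. The degenerate case $n=0$ (where $T^0\circ\abe=\Z=P_0$ is projective and the computation is immediate) should be checked separately to confirm consistency with the formula.
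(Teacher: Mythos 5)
Your proposal is correct and follows essentially the same route as the paper: the sum--diagonal adjunction (with the standard justification that $\delta_n^*$ preserves projectives since its right adjoint $\pi_n^*$ is exact), the exponential-type decomposition (\ref{exponential}), the iterated K\"unneth isomorphism of Lemma \ref{K2} combined with Proposition \ref{Ext-ab} and Remark \ref{cas0}, and finally the identification with $\Z[Surj(m,n)]$ via the factorization of Lemma \ref{map-Omega}. The only differences are presentational: you spell out the derived-functor justification of the adjunction step, which the paper leaves implicit.
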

\begin{proof}
\begin{eqnarray*}
&\ &Ext^*_{\F(\gr)}(T^n \circ \abe, T^m \circ \abe) \\
&\simeq& Ext^*_{\F(\gr \times \ldots\times \gr)}( \abe^{\boxtimes n}, T^m \circ \abe \circ \pi_n) \text{\ by the sum-diagonal adjunction}\\
&\simeq &Ext^*_{\F(\gr \times \ldots\times \gr)}\big( \abe^{\boxtimes n}, \underset{i_1+ \ldots+i_n=m}{\bigoplus} (T^{i_1}\circ \abe \boxtimes \ldots \boxtimes T^{i_n}\circ \abe)\underset{\mathfrak{S}_{i_1}\times \ldots \times \mathfrak{S}_{i_n}}{\otimes}\Z[\mathfrak{S}_m] \big)
 \text{\ by \ref{exponential}}\\
&\simeq &\underset{i_1+ \ldots+i_n=m}{\bigoplus}Ext^*_{\F(\gr \times \ldots\times \gr)}\big( \abe^{\boxtimes n}, (T^{i_1}\circ \abe \boxtimes \ldots \boxtimes T^{i_n}\circ \abe)\underset{\mathfrak{S}_{i_1}\times \ldots \times \mathfrak{S}_{i_n}}{\otimes}\Z[\mathfrak{S}_m] \big)\\
&\simeq &\underset{i_1+ \ldots+i_n=m}{\bigoplus}\big(\underset{k=1}{\overset{n}{\bigotimes}}\ Ext^*_{\F(\gr)}( \abe, T^{i_k}\circ \abe )\big)\underset{\mathfrak{S}_{i_1}\times \ldots \times \mathfrak{S}_{i_n}}{\otimes}\Z[\mathfrak{S}_m] \text{\ by Lemma \ref{K2}}\\
&\simeq& \left\lbrace\begin{array}{ll}
\underset{i_1+ \ldots+i_n=m \atop i_k > 0, \forall k }{\bigoplus}\big( \Z \underset{\mathfrak{S}_{i_1}\times \ldots \times \mathfrak{S}_{i_n}}{\otimes}\Z[\mathfrak{S}_m] \big)& \text{if } *=m-n\\
 0 & \text{otherwise}
\end{array}
 \right.
\end{eqnarray*}
where the last isomorphism follows from Proposition \ref{Ext-ab} and Remark \ref{cas0}.

Let $f \in Surj(m,n)$, by Lemma \ref{map-Omega} $f$ admits a unique decomposition of the form $f=s \circ \alpha$ where $\alpha$ is the inverse of a $(|f^{-1}(1)|, \ldots, |f^{-1}(n)|)$-shuffle and $s$ is an order preserving surjection. So we deduce that the map:
$$\kappa: \Z[Surj(m,n)] \to \underset{i_1+ \ldots+i_n=m \atop i_k > 0, \forall k }{\bigoplus}\big( \Z \underset{\mathfrak{S}_{i_1}\times \ldots \times \mathfrak{S}_{i_n}}{\otimes}\Z[\mathfrak{S}_m] \big)$$
given by $\kappa([f])=1 \otimes \alpha \in \Z \underset{\mathfrak{S}_{|f^{-1}(1)|}\times \ldots \times \mathfrak{S}_{|f^{-1}(n)|}}{\otimes}\Z[\mathfrak{S}_m] $
is an isomorphism.
\end{proof}

\begin{rem}
 The previous theorem is used  in \cite{DPV} to prove that, if $n \leq m$, the homological dimension of the functor $T^n \circ \mathfrak{a} \in \F_m(\gr)$ in the category of polynomial functors of degree $\leq m$ from $\textbf{gr}$ to $\textbf{Ab}$ is $m-n$ (see  \cite[Proposition 4.1]{DPV}).
\end{rem}
For $A_n$ and $B_n$ ordered sets of cardinality $n$ and $f: A_n \to B_n$ a bijection, denote by $\bar{f}: B_n \to B_n$ the unique permutation of $B_n$ such that $\bar{f} \circ u =f$ where $u: A_n \to B_n$ is the unique order preserving map.

In the following proposition we make explicit the actions of the symmetric groups on $Ext^{*}_{\F(\gr)}(T^n \circ \abe, T^m \circ \abe)$.

\begin{prop}\label{action2}
The symmetric groups $\mathfrak{S}_m$ and $\mathfrak{S}_n$ act on $Ext^{m-n}_{\F(\gr)}(T^n \circ \abe, T^m \circ \abe)\simeq \Z[Surj(m,n)]$ in the following way: for $\sigma \in \mathfrak{S}_m$, $\tau_{k,l} \in \mathfrak{S}_n$ the transposition of $k$ and $l$ where $k,l \in \{1, \ldots, n\}$ and $f \in Surj(m,n)$
$$[f].\sigma= \underset{1\leq i \leq n}{\prod}\epsilon(\overline{\sigma_{\mid (f \circ \sigma)^{-1}(i)}})[f\circ \sigma]$$
where $\sigma_{\mid (f \circ \sigma)^{-1}(i)}:(f \circ \sigma)^{-1}(i)\to \sigma((f \circ \sigma)^{-1}(i))$ $$\tau_{k,l}.[f]=(-1)^{(|f^{-1}(k)|-1)(|f^{-1}(l)|-1)} [\tau_{k,l} \circ f].$$
\end{prop}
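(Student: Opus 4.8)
The plan is to trace each of the two actions through the chain of isomorphisms established in the proof of Theorem \ref{Ext}, keeping careful track of the two independent sources of signs: the Koszul sign rule governing permutations of graded tensor factors (Lemma \ref{signe}) and the sign representation carried by each factor $Ext^*_{\F(\gr)}(\abe, T^{i_k}\circ\abe)=\Lambda(i_k)[i_k-1]$ (Proposition \ref{action1}). The starting point is the explicit generator: under the isomorphism $\kappa$ of Theorem \ref{Ext}, a surjection $f\in Surj(m,n)$ of type $(i_1,\ldots,i_n)=(|f^{-1}(1)|,\ldots,|f^{-1}(n)|)$ corresponds to $1\otimes\alpha$ in the summand $\Z\otimes_{\mathfrak{S}_{i_1}\times\ldots\times\mathfrak{S}_{i_n}}\Z[\mathfrak{S}_m]$, where $f=s\circ\alpha$ is the decomposition of Lemma \ref{map-Omega} and the left factor $\Z$ carries the product of sign representations of $\mathfrak{S}_{i_1}\times\ldots\times\mathfrak{S}_{i_n}$ coming from Proposition \ref{action1}.

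The structural observation that separates the two computations is that $\mathfrak{S}_n$ permutes the $n$ graded factors $Ext^*_{\F(\gr)}(\abe, T^{i_k}\circ\abe)$ among themselves, whereas $\mathfrak{S}_m$ acts only inside these factors and on the $\Z[\mathfrak{S}_m]$ term, without reordering them. For the action of $\mathfrak{S}_n$ it therefore suffices, since transpositions generate, to treat $\tau_{k,l}$: via the sum-diagonal adjunction it exchanges the $k$-th and $l$-th copies of $\abe$, hence swaps the blocks of type $(\ldots,i_k,\ldots,i_l,\ldots)$ and $(\ldots,i_l,\ldots,i_k,\ldots)$, sending the generator attached to $f$ to the one attached to $\tau_{k,l}\circ f$, while Lemma \ref{signe} produces exactly the Koszul sign $(-1)^{(i_k-1)(i_l-1)}$ arising from transposing the two graded factors of degrees $i_k-1$ and $i_l-1$. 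One then checks that the shuffle datum $\alpha$ is carried to the shuffle datum of $\tau_{k,l}\circ f$ with no further sign, which yields the second formula.

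For the action of $\mathfrak{S}_m$ I would use the exponential property (\ref{exponential}), under which $\mathfrak{S}_m$ acts by right multiplication on the $\Z[\mathfrak{S}_m]$ factor, so that the generator $1\otimes\alpha$ is sent to $1\otimes\alpha\sigma$. Since $f\circ\sigma$ has the same type as $f$, its canonical decomposition is $f\circ\sigma=s\circ\alpha'$ with the \emph{same} order-preserving part $s$, and one writes $\alpha\sigma=\beta\alpha'$ with $\beta$ in the Young subgroup $\mathfrak{S}_{i_1}\times\ldots\times\mathfrak{S}_{i_n}$ (the stabiliser of $s$ under precomposition, so that $s\circ\beta=s$). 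Pulling $\beta$ across the tensor product, where it acts through the product of sign representations, gives $1\otimes\alpha\sigma=\epsilon(\beta)\,(1\otimes\alpha')$, that is $\epsilon(\beta)\,[f\circ\sigma]$. It then remains to identify the $i$-th component $\beta_i$ of $\beta$ with the permutation $\overline{\sigma_{\mid (f\circ\sigma)^{-1}(i)}}$ obtained by restricting $\sigma$ to the $i$-th fibre $(f\circ\sigma)^{-1}(i)=\sigma^{-1}(f^{-1}(i))$ and comparing with $f^{-1}(i)$ through the order-preserving bijections, which yields $\epsilon(\beta)=\prod_{1\leq i\leq n}\epsilon(\overline{\sigma_{\mid (f\circ\sigma)^{-1}(i)}})$ and hence the first formula.

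The main obstacle will be the sign bookkeeping in the $\mathfrak{S}_m$ case: one must disentangle the shuffle decompositions of $f$ and $f\circ\sigma$ and verify that the correction permutation $\beta$ splits fibrewise into the restrictions of $\sigma$ whose signs appear in the statement, and equally confirm that no residual Koszul sign contaminates this computation, precisely because $\mathfrak{S}_m$ leaves the ordered list of graded factors $e_1\otimes\ldots\otimes e_n$ fixed (this is where the clean separation with the $\mathfrak{S}_n$ case is used). Checking the analogous absence of a stray sign in the shuffle renormalisation for $\tau_{k,l}$ is the secondary point to pin down.
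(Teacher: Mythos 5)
Your proposal is correct and follows essentially the same route as the paper's proof: both transport the two actions through the isomorphism $\kappa$ of Theorem \ref{Ext} using the decomposition $f=s\circ\alpha$ of Lemma \ref{map-Omega}, obtain the $\mathfrak{S}_m$-signs by rewriting $\alpha\sigma$ as a Young-subgroup element times the shuffle datum of $f\circ\sigma$ and pushing that element across the tensor factors where it acts by signs (Proposition \ref{action1}), and obtain the $\mathfrak{S}_n$-sign as the Koszul sign of Lemma \ref{signe} attached to the block transposition $T_{k,l}$. The only difference is presentational: you make explicit the factorisation $\alpha\sigma=\beta\alpha'$ that the paper leaves implicit in its third equality.
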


\begin{proof}
Let $f$ be an element of $Surj(m,n)$ such that $f=s\circ \sigma$ is the unique decomposition of $f$ of the form described in Lemma \ref{map-Omega}.

For $\sigma \in \mathfrak{S}_m$, the action of $\sigma$ on $\Z[Surj(m,n)]$ is given by the following composition, where $\phi$ is the map given by the action of $\sigma$ on $$ \underset{i_1+ \ldots +i_n=m}{\bigoplus} (\underset{k=1}{\overset{n}{\otimes}} Ext^{i_k-1}(\abe, \abe^{\otimes i_k}) \underset{\mathfrak{S}_{i_1} \times \ldots \times \mathfrak{S}_{i_n}}{\bigotimes} \Z[\mathfrak{S}_m]) $$
{\footnotesize{
$$\xymatrix{
\Z[Surj(m,n)]\ar[r]^-{\simeq}_-{\kappa} & \underset{i_1+ \ldots +i_n=m}{\bigoplus} \Z \underset{\mathfrak{S}_{i_1} \times \ldots \times \mathfrak{S}_{i_n}}{\bigotimes} \Z[\mathfrak{S}_m] \ar[d]^{\simeq}\\
&   \underset{i_1+ \ldots +i_n=m}{\bigoplus} (\underset{k=1}{\overset{n}{\otimes}} Ext^{i_k-1}(\abe, \abe^{\otimes i_k}) \underset{\mathfrak{S}_{i_1} \times \ldots \times \mathfrak{S}_{i_n}}{\bigotimes} \Z[\mathfrak{S}_m]) \ar[d]^\phi \\
&  \underset{i_1+ \ldots +i_n=m}{\bigoplus} (\underset{k=1}{\overset{n}{\otimes}} Ext^{i_k-1}(\abe, \abe^{\otimes i_k}) \underset{\mathfrak{S}_{i_1} \times \ldots \times \mathfrak{S}_{i_n}}{\bigotimes} \Z[\mathfrak{S}_m] )  \ar[d]^{\simeq}\\
 \Z[Surj(m,n)]  & \underset{i_1+ \ldots +i_n=m}{\bigoplus} \Z \underset{\mathfrak{S}_{i_1} \times \ldots \times \mathfrak{S}_{i_n}}{\bigotimes} \Z[\mathfrak{S}_m] \ar[l]^-{\kappa^{-1}} 
}$$}}
we have
$$\kappa^{-1} \circ \phi \circ \kappa ([f])=\kappa^{-1} \circ \phi \circ \kappa ([s\circ \alpha])
=\kappa^{-1} \circ \phi \left((1 \otimes [\alpha])_{|f^{-1}(1)|, \ldots,|f^{-1}(n)|} \right)
$$
$$=\kappa^{-1}(\underset{1\leq i \leq n}{\prod}\epsilon(\overline{\sigma_{\mid (f \circ \sigma)^{-1}(i)}}) \ (1 \otimes [\alpha \circ \sigma])_{|f^{-1}(1)|, \ldots,|f^{-1}(n)|})=\underset{1\leq i \leq n}{\prod}\epsilon(\overline{\sigma_{\mid (f \circ \sigma)^{-1}(i)}}) [s \circ \alpha \circ \sigma]$$
$$=\underset{1\leq i \leq n}{\prod}\epsilon(\overline{\sigma_{\mid (f \circ \sigma)^{-1}(i)}}) [f \circ \sigma]$$
where we deduce the third equality from Proposition \ref{action1} and where $(-)_{i_1, \ldots, i_n}$ corresponds to the summand indexed by $i_1, \ldots, i_n$.

For $\tau_{k,l} \in \mathfrak{S}_n$, we have $\tau_{k,l} \circ f=\tau_{k,l} \circ s \circ \alpha=s' \circ T_{k,l} \circ \alpha$ where $s'$ is the order preserving surjection from $m$ to $n$ such that $|s'^{-1}(l)|=|s^{-1}(k)|$, $|s'^{-1}(k)|=|s^{-1}(l)|$ and $|s'^{-1}(q)|=|s'^{-1}(q)|$ for $q \in \{1, \ldots, m\} \setminus \{k,l\}$ and $T_{k,l}$ is the permutation obtained by the bloc transposition of $s^{-1}(k)$ and $s^{-1}(l)$. Note that $T_{k,l} \circ \alpha$ is the inverse of a $(|(\tau_{k,l}\circ f)^{-1}(1)|, \ldots,|(\tau_{k,l}\circ f)^{-1}(n)|)$-shuffle.

For $\tau_{k,l}  \in \mathfrak{S}_n$, the action of $\tau_{k,l} $ on $\Z[Surj(m,n)]$ is given by the following composition, where $\phi'$ is the map given by the action of $\tau_{k,l} $ on  $$ \underset{i_1+ \ldots +i_n=m}{\bigoplus} (\underset{k=1}{\overset{n}{\otimes}} Ext^{i_k-1}(\abe, \abe^{\otimes i_k}) \underset{\mathfrak{S}_{i_1} \times \ldots \times \mathfrak{S}_{i_n}}{\bigotimes} \Z[\mathfrak{S}_m]) $$
{\footnotesize{
$$\xymatrix{
\Z[Surj(m,n)]\ar[r]^-{\simeq}_-{\kappa} & \underset{i_1+ \ldots +i_n=m}{\bigoplus} \Z \underset{\mathfrak{S}_{i_1} \times \ldots \times \mathfrak{S}_{i_n}}{\bigotimes} \Z[\mathfrak{S}_m] \ar[d]^{\simeq}\\
&   \underset{i_1+ \ldots +i_n=m}{\bigoplus} (\underset{k=1}{\overset{n}{\otimes}} Ext^{i_k-1}(\abe, \abe^{\otimes i_k}) \underset{\mathfrak{S}_{i_1} \times \ldots \times \mathfrak{S}_{i_n}}{\bigotimes} \Z[\mathfrak{S}_m]) \ar[d]^{\phi'} \\
&  \underset{i_1+ \ldots +i_n=m}{\bigoplus} (\underset{k=1}{\overset{n}{\otimes}} Ext^{i_k-1}(\abe, \abe^{\otimes i_k}) \underset{\mathfrak{S}_{i_1} \times \ldots \times \mathfrak{S}_{i_n}}{\bigotimes} \Z[\mathfrak{S}_m] )  \ar[d]^{\simeq}\\
 \Z[Surj(m,n)]  & \underset{i_1+ \ldots +i_n=m}{\bigoplus} \Z \underset{\mathfrak{S}_{i_1} \times \ldots \times \mathfrak{S}_{i_n}}{\bigotimes} \Z[\mathfrak{S}_m] \ar[l]^-{\kappa^{-1}} 
}$$}}

we have
$$\kappa^{-1} \circ \phi' \circ \kappa ([f])=\kappa^{-1} \circ \phi' \circ \kappa ([s\circ \alpha])
=\kappa^{-1} \circ \phi' \left((1 \otimes [\alpha])_{|f^{-1}(1)|, \ldots,|f^{-1}(n)|} \right)
$$
$$=\kappa^{-1}((-1)^{(i_k-1)(i_l-1)} 1 \otimes [T_{k,l} \circ \alpha]_{|(\tau_{k,l}\circ f)^{-1}(1)|, \ldots,|(\tau_{k,l}\circ f)^{-1}(n)|})
$$
$$=(-1)^{(i_k-1)(i_l-1)}  [s' \circ \alpha']=(-1)^{(i_k-1)(i_l-1)} [\tau_{k,l}\circ f]$$
where the third equality is a consequence of Lemma \ref{signe}.
\end{proof}

\section{Products and the structure of PROP}
The aim of this section is to make explicit the Yoneda product and the external product on $Ext^*_{\F(\gr)}(T^n \circ \abe, T^m \circ \abe)$ and to deduce the structure of the PROP governing the graded groups $Ext^*_{\F(\gr)}(T^n \circ \abe, T^m \circ \abe)$.
\begin{prop} \label{products}
\begin{enumerate}
\item The Yoneda product:
$$\mathcal{Y}: Ext^{m-l}_{\F(\gr)}(T^l \circ \abe, T^m \circ \abe) \otimes Ext^{n-m}_{\F(\gr)}(T^m \circ \abe, T^n \circ \abe) \to Ext^{n-l}_{\F(\gr)}(T^l \circ \abe, T^n \circ \abe)$$
is induced, via the isomorphism of Theorem \ref{Ext},  by the map 
$$Y: \Z[Surj(m,l)] \otimes \Z[Surj(n,m)] \to \Z[Surj(n,l)]$$
given by 
$$Y([g] \otimes [f])=\underset{k=1}{\overset{l}{\prod}} \epsilon(\overline{{\tau_{i,j}}_{|g^{-1}(k)}} )  (-1)^{(|f^{-1}(i)|-1)(|f^{-1}(j)|-1)}[g\circ f]$$
where $f \in Surj(n,m)$ and $g \in Surj(m,l)$ is of the form $g=s \circ \tau_{i, j}$ where $s$ is an order preserving surjection and $\tau_{i, j} \in \mathfrak{S}_m$ denotes the transposition of $i$ and $j$.

\item 
The external product
$$Ext^{m-l}_{\F(\gr)}(T^l \circ \abe, T^m \circ \abe) \otimes Ext^{n-p}_{\F(\gr)}(T^p \circ \abe, T^n \circ \abe) \to Ext^{m+n-l-p}_{\F(\gr)}(T^{l+p} \circ \abe, T^{m+n} \circ \abe)$$
is induced by the disjoint union of sets via the isomorphism of  Theorem \ref{Ext}.
\end{enumerate}
\end{prop}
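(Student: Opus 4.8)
The plan is to prove both products by transporting them, through the isomorphism $\kappa$ of Theorem~\ref{Ext}, to explicit operations on cocycles built out of the fundamental class of $Ext^{m-1}_{\F(\gr)}(\abe, T^m\circ\abe)\cong\Z$ and of shuffle permutations. Recall that $\kappa([f])=1\otimes\alpha$, where $f=s\circ\alpha$ is the decomposition of Lemma~\ref{map-Omega}; under the Künneth identifications of Lemmas~\ref{K2} and~\ref{lm-simplicial} the element $1$ is the product of the generators $\mu_{i_k}$ of $Ext^{i_k-1}_{\F(\gr)}(\abe,T^{i_k}\circ\abe)$, so that $[f]$ is represented by an external tensor product of fundamental cocycles followed by the action of the permutation $\alpha$. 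Thus every class is, up to sign, of the normal form (external product of fundamental classes)$\,\cdot\,$(permutation), which is exactly what a PROP-theoretic description predicts and what makes the two products computable.

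I would treat the external product first, since it is the formal part. It is induced by the external tensor product $-\boxtimes-$ together with the multiplicativity of the Künneth cross product. Because $\kappa$ is itself assembled from these same external products (via the exponential decomposition \ref{exponential} and Lemma~\ref{K2}), unwinding the definitions on representatives shows that the cross product of the cocycle for $[f]\in Surj(m,l)$ with the cocycle for $[f']\in Surj(n,p)$ is the cocycle attached to the concatenation $f\sqcup f'\in Surj(m+n,l+p)$. The shuffle-plus-order-preserving decomposition of a disjoint union is the concatenation of the two decompositions, and since the two blocks are juxtaposed in the prescribed order no extra Koszul sign appears; this gives assertion (2).

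For the Yoneda product I would first reduce the general case $g=s\circ\tau_{i,j}$ to the case of an order-preserving $s$ by equivariance. The Yoneda product is balanced for the two commuting actions of $\mathfrak{S}_m$ on the target of the first factor and the source of the second factor, because the underlying composition of chain maps is strictly associative: postcomposing the first class by $\tau_{i,j}\in\mathrm{Aut}(T^m\circ\abe)$ equals precomposing the second by it. Rewriting $[g]=[s\circ\tau_{i,j}]$ on the target side introduces, by Proposition~\ref{action2}, the factor $\prod_{k}\epsilon(\overline{\tau_{i,j}|_{g^{-1}(k)}})$, while transporting $\tau_{i,j}$ onto $f$ on the source side introduces, again by Proposition~\ref{action2}, the factor $(-1)^{(|f^{-1}(i)|-1)(|f^{-1}(j)|-1)}$. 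The two sign families in the statement are produced precisely here, and one is left with $Y([s]\otimes[\tau_{i,j}\circ f])$.

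It therefore remains to prove the base case $Y([s]\otimes[h])=[s\circ h]$ with no sign for $s$ order preserving, and this is the \emph{main obstacle}. Here $\kappa([s])=1\otimes\mathrm{id}$, so $[s]$ has trivial permutation part and the computation reduces to the operadic composition of fundamental classes $\gamma(\mu_l;\mu_{i_1},\dots,\mu_{i_l})=\mu_{i_1+\cdots+i_l}$. Since the target Ext group is free of rank one (Proposition~\ref{Ext-ab}), the composite is an integer multiple of the generator, and the real work is an explicit cochain computation on the complexes $NF(\Z^{*\bullet})$ of Lemma~\ref{lm-simplicial} showing that this multiple is $+1$; the absence of any shuffle means that Lemma~\ref{signe} contributes no Koszul sign, which is what forces the base case to be sign-free. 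Once this atomic computation is pinned down, composing it with the reduction of the previous paragraph assembles the stated formula, and the sign-freeness of order-preserving composition propagates through $s\circ h=(s\circ s')\circ\alpha$ since a composite of order-preserving surjections is again order preserving.
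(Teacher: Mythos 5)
Your argument follows the same route as the paper's. For the Yoneda product, the paper's entire proof is your second reduction: composition is balanced over $\End_{\F(\gr)}(T^m\circ\abe)\simeq\Z[\mathfrak{S}_m]$, so the two sign families are read off from Proposition \ref{action2}; for the external product, the paper likewise unwinds the normal form $(E_1\otimes\dots\otimes E_l)\otimes\sigma$ coming from the exponential decomposition and Lemma \ref{K2} and observes that juxtaposition of blocks produces $\sigma\vee\sigma'$ with no extra Koszul sign. So on both points you reproduce the published argument.

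Where you differ is that you explicitly isolate the base case $Y([s]\otimes[h])=[s\circ h]$ for $s$ order preserving, i.e.\ the normalization $\gamma(\mu_l;\mu_{i_1},\dots,\mu_{i_l})=+\mu_{i_1+\dots+i_l}$ of the fundamental classes, and you correctly observe that without it the balancedness argument only shows the composite is \emph{some} integer multiple of $[g\circ f]$. This is a sharper accounting than the paper gives: the paper never addresses this step, implicitly treating the isomorphism of Theorem \ref{Ext} as being \emph{defined} by the iterated external product $\mathcal{E}^{i_1,\dots,i_l}$ followed by the $\mathfrak{S}_m$-twist, so that the base case holds by construction of the generators. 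But you do not actually perform the cochain computation you announce --- you only state what it should yield. As written your proof therefore still contains an unproved kernel, namely the compatibility of the degree-shifting K\"unneth product of Lemma \ref{lm-simplicial} with Yoneda composition through the sum--diagonal adjunction, which is exactly what forces the multiple to be $+1$ rather than, say, a multinomial coefficient. To close it, either carry out the computation on the explicit resolution $D_\bullet$ of $\abe$ (tracking the Eilenberg--Zilber identification of representing cocycles), or argue that the base case is true by definition once the generators of $Ext^{m-1}_{\F(\gr)}(\abe,T^m\circ\abe)$ are chosen coherently via Theorem \ref{Ext}; flagging the step is the right instinct, but leaving it as ``the real work'' leaves the proof incomplete at precisely the point where it ceases to be formal.
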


\begin{proof}
\begin{enumerate}
\item
Considering the action of $Hom_{\F(\gr)}(T^m \circ \abe, T^m \circ \abe) \simeq \Z[\mathfrak{S}_m]$ on $Ext^{m-l}_{\F(\gr)}(T^l \circ \abe, T^m \circ \abe) $ and on $Ext^{n-m}_{\F(\gr)}(T^m \circ \abe, T^n \circ \abe)$ we obtain that the map 
$$Ext^{m-l}_{\F(\gr)}(T^l \circ \abe, T^m \circ \abe) \otimes Ext^{n-m}_{\F(\gr)}(T^m \circ \abe, T^n \circ \abe) \to Ext^{n-l}_{\F(\gr)}(T^l \circ \abe, T^n \circ \abe)$$
induces a map:
$$Ext^{m-l}(T^l \circ \abe, T^m \circ \abe) \underset{\mathfrak{S}_m}{\otimes} Ext^{n-m}(T^m \circ \abe, T^n \circ \abe) \to Ext^{n-l}(T^l \circ \abe, T^n \circ \abe).$$
So the signs can easily be deduced from Proposition \ref{action2}.
\item
We have 
$$Ext^{m-l}(T^l \circ \abe, T^m \circ \abe) \simeq  \underset{i_1+ \ldots +i_l=m}{\bigoplus} (\underset{k=1}{\overset{l}{\otimes}} Ext^{i_l-1}(\abe, \abe^{\otimes i_l}) \underset{\mathfrak{S}_{i_1} \times \ldots \times \mathfrak{S}_{i_l}}{\bigotimes} \Z[\mathfrak{S}_m]) $$
$$Ext^{n-p}(T^p \circ \abe, T^n \circ \abe) \simeq  \underset{j_1+ \ldots +j_p=n}{\bigoplus} (\underset{k=1}{\overset{p}{\otimes}} Ext^{j_p-1}(\abe, \abe^{\otimes j_p}) \underset{\mathfrak{S}_{j_1} \times \ldots \times \mathfrak{S}_{j_p}}{\bigotimes} \Z[\mathfrak{S}_n]) $$
and
$$Ext^{m+n-l-p}(T^{l+p} \circ \abe, T^{m+n} \circ \abe) \simeq  \underset{i_1+ \ldots +i_{l+p}=m+n}{\bigoplus} (\underset{k=1}{\overset{l+p}{\otimes}} Ext^{i_{l+p}-1}(\abe, \abe^{\otimes i_{l+p}}) \underset{\mathfrak{S}_{i_1} \times \ldots \times \mathfrak{S}_{i_{l+p}}}{\bigotimes} \Z[\mathfrak{S}_{m+n}]) $$
For fixed $i_1, \ldots, i_l, j_1, \ldots j_k$ such that $i_1+ \ldots +i_l=m$ and $j_1+ \ldots +j_p=n$, $E_k \in Ext^{i_k-1}(\abe, \abe^{\otimes i_k})$ and $F_\alpha \in Ext^{j_\alpha-1}(\abe, \abe^{\otimes j_\alpha})$ and $\sigma \in \mathfrak{S}_n$ and $\sigma' \in \mathfrak{S}_m$ the external product is given by the map
$$E_1 \otimes \ldots E_l \otimes \sigma \otimes F_1 \otimes \ldots \otimes F_p \otimes \sigma' \mapsto E_1 \otimes \ldots \otimes E_l \otimes F_1 \otimes \ldots \otimes F_p \otimes \sigma \vee \sigma'$$
where $\sigma \vee \sigma': m+n \to m+n$ is the permutation such that $\sigma \vee \sigma'(i)=\sigma(i)$ for $1\leq i\leq n$ and $\sigma \vee \sigma'(i)=\sigma'(i)$ for $n+1\leq i\leq n+m$. This map corresponds to the disjoint union via the isomorphism obtained in Theorem \ref{Ext}.
\end{enumerate}
\end{proof}
For $i_1, \ldots, i_k$ integers, iterating the external product, we obtain a graded map
$$\mathcal{E}^{i_1, \ldots,  i_k}: Ext^{*}_{\F(\gr)}(\abe, T^{i_1} \circ \abe) \otimes \ldots \otimes Ext^{*}_{\F(\gr)}(\abe, T^{i_k} \circ \abe) \to Ext^{*}_{\F(\gr)}(T^{k} \circ \abe, T^{i_1+ \ldots+ i_k} \circ \abe).$$
This map is $\mathfrak{S}_{i_1} \times \ldots \times \mathfrak{S}_{i_k}$-equivariant by Propositions \ref{action1} and \ref{action2}. It induces a map
$$\underset{i_1+ \ldots+ i_k=m \atop i_p>0, \forall p}{\bigoplus}\overset{k}{\underset{p=1}{\bigotimes}}Ext^{*}_{\F(\gr)}(\abe, T^{i_p} \circ \abe)\underset{\mathfrak{S}_{i_1} \times \ldots \times \mathfrak{S}_{i_k}}{\otimes} \mathbb{Z}[\mathfrak{S}_{m}]\to Ext^{*}_{\F(\gr)}(T^{k} \circ \abe, T^{m} \circ \abe)$$
which is an isomorphism by the exponential type property (see the proof of Theorem \ref{Ext}). We deduce that the elements of $Ext^{*}_{\F(\gr)}(T^{k} \circ \abe, T^{m} \circ \abe)$ are obtained from elements of $Ext^{*}_{\F(\gr)}(\abe, T^{i} \circ \abe)$ using the external product. In other words, the isomorphism of Theorem \ref{Ext} corresponds to the following statement.

\begin{prop} \label{suites-sym}
The external product induces, with the respect to the action on the second variable, an isomorphism
$$\mathbb{T}(\underset{m\geq 0}{\bigoplus} \Lambda(m)[m-1])\xrightarrow{\simeq} Ext(T^{\bullet_1} \circ \abe, T^{\bullet_2} \circ \abe)$$
where $\mathbb{T}$ is the tensor product of symmetric sequences and $\bullet_1$ corresponds to the tensor length. The action of the symmetric group $\mathfrak{S}_{\bullet_1}$ corresponds to the usual signs for the grading $[-]$.
\end{prop}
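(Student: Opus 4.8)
The plan is to recognize the asserted morphism as the iterated external product $\mathcal{E}^{i_1,\dots,i_k}$ constructed immediately before the statement, and then to read off its properties from results already in hand. First I would recall that Propositions \ref{Ext-ab} and \ref{action1} identify the symmetric sequence $m \mapsto Ext^*_{\F(\gr)}(\abe, T^m \circ \abe)$ with $\bigoplus_{m\geq 0}\Lambda(m)[m-1]$, so that by the very definition of the tensor product of symmetric sequences the bidegree-$(k,m)$ component of $\mathbb{T}(\bigoplus_{m\geq 0}\Lambda(m)[m-1])$ is
$$\underset{i_1+\dots+i_k=m}{\bigoplus}\Big(\overset{k}{\underset{p=1}{\bigotimes}}Ext^*_{\F(\gr)}(\abe, T^{i_p}\circ \abe)\Big)\underset{\mathfrak{S}_{i_1}\times\dots\times\mathfrak{S}_{i_k}}{\otimes}\Z[\mathfrak{S}_m],$$
which is exactly the source of the induced map discussed just above the proposition.

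Next I would argue bijectivity. Since $\mathcal{E}^{i_1,\dots,i_k}$ is $\mathfrak{S}_{i_1}\times\dots\times\mathfrak{S}_{i_k}$-equivariant by Propositions \ref{action1} and \ref{action2}, it descends to the induced representation displayed above, landing in $Ext^*_{\F(\gr)}(T^k\circ\abe, T^m\circ\abe)$. I would then identify this descended map with the chain of isomorphisms in the proof of Theorem \ref{Ext} --- concretely with the isomorphism $\kappa$ --- the point being that the exponential-type decomposition (\ref{exponential}) used there is precisely the decomposition that the iterated external product assembles. Bijectivity is thereby inherited from Theorem \ref{Ext}, and the compatibility of the external product with disjoint union (Proposition \ref{products}(2)) is what makes $\mathcal{E}$ a morphism of bisymmetric sequences.

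It then remains to match the two symmetric-group actions. The action on the second variable $\bullet_2$ is carried by the $\Z[\mathfrak{S}_m]$-factor on both sides and agrees tautologically, which is the meaning of ``with respect to the action on the second variable''. The substantive point is the action of $\mathfrak{S}_{\bullet_1}=\mathfrak{S}_k$ permuting the tensor factors: in $g\textbf{Ab}^-$ this permutation is governed by the Koszul sign rule applied to the factors $\Lambda(i_p)[i_p-1]$ of respective degrees $i_p-1$, so an adjacent transposition of the $p$-th and $(p{+}1)$-th factors contributes the sign $(-1)^{(i_p-1)(i_{p+1}-1)}$. I would compare this against the first-variable formula of Proposition \ref{action2}, namely $\tau_{k,l}.[f]=(-1)^{(|f^{-1}(k)|-1)(|f^{-1}(l)|-1)}[\tau_{k,l}\circ f]$, which records the same sign once $i_p=|f^{-1}(p)|$, the relabelling $f\mapsto \tau_{k,l}\circ f$ matching the block transposition on $\Z[\mathfrak{S}_m]$.

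The main obstacle I anticipate is bookkeeping rather than conceptual: verifying that the Koszul signs produced by the tensor product of symmetric sequences coincide with the signs computed in Proposition \ref{action2}, checking that it suffices to treat the adjacent transpositions that generate $\mathfrak{S}_k$, and confirming that these signs multiply consistently. Since both families of signs ultimately descend from Lemma \ref{signe}, this comparison should be direct, and the proposition follows by assembling Theorem \ref{Ext} with Propositions \ref{action1}, \ref{action2} and \ref{products}.
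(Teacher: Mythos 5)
Your proposal is correct and follows essentially the same route as the paper: the paper's justification is exactly the paragraph preceding the proposition, where the iterated external product $\mathcal{E}^{i_1,\ldots,i_k}$ is shown to be $\mathfrak{S}_{i_1}\times\cdots\times\mathfrak{S}_{i_k}$-equivariant (via Propositions \ref{action1} and \ref{action2}) and to induce an isomorphism on the induced representation by the exponential-type property from the proof of Theorem \ref{Ext}. Your additional verification that the Koszul signs of the tensor product in $g\textbf{Ab}^-$ match the first-variable signs of Proposition \ref{action2} is a point the paper asserts without elaboration, and your treatment of it is consistent with the paper's conventions.
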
 

Recall (see for example \cite[section 2.1]{Fresse}) that tensor product of symmetric sequences $M$ and $N$ is given by 
$$(M \otimes N)(n)=\underset{p+q=n}{\bigoplus} Ind_{\mathfrak{S}_p \times \mathfrak{S}_q}^{\mathfrak{S}_n}M(p) \otimes N(q).$$

By Proposition \ref{products} $(2)$, the graded category $\mathcal{E}$ having as objects natural numbers, where the morphisms from $m$ to $l$ are given by the groups $Ext^{*}_{\F(\gr)}(T^l \circ \abe, T^m \circ \abe)$ and where the composition is given by the Yoneda product is a graded PROP (i.e. a graded symmetric monoidal category with objects the natural numbers whose symmetric monoidal structure is given by the sum of integers). We prove below that this graded PROP is the PROP associated to its endomorphism operad whose definition we recall.
\begin{defi} \label{operad}
The endomorphism operad associated to the graded PROP $\mathcal{E}$ is the graded symmetric sequence
$\mathcal{Q}=\{\mathcal{Q}(n)\}_{n\geq 0}$ given by $\mathcal{Q}(n)=Ext^{*}_{\F(\gr)}(\abe, T^n \circ \abe)$. The maps
$$\gamma(i_1, \ldots, i_k): \mathcal{Q}(k) \otimes \mathcal{Q}(i_1) \otimes \ldots \mathcal{Q}(i_k) \to \mathcal{Q}(i_1+ \ldots +i_k)$$
are given by 
$$\gamma(i_1, \ldots, i_k)(x, x_{i_1}, \ldots, x_{i_k})=\mathcal{Y}(x, \mathcal{E}^{i_1, \ldots,  i_k}(x_{i_1}, \ldots, x_{i_k}))$$
for $x \in \mathcal{Q}(k)$ and $x_{i_l}\in \mathcal{Q}(i_l)$.
\end{defi}

\begin{rem}
Note the similarity between the operad $\mathcal{Q}$ and the \textit{determinant operad}  introduced by Ginzburg and Kapranov in \cite[1.3.21]{GK}. The \textit{determinant operad} $\mathcal{P}$  is the operad in $g\textbf{Ab}^-$  such that $\mathcal{P}(n)= \Lambda(n)[1-n]$ (the sign representation of $\mathfrak{S}_n$ placed in degree $(1-n)$).
\end{rem}

The graded PROP $\mathcal{C}$ freely generated by a graded operad $P$ is a graded symmetric monoidal category such that $\C(1,n)=P(n)$. We refer the reader to \cite[Definition 1.5]{HPV} for the concrete description of the composition in this PROP in the ungraded case, which can be easily extend to the graded case. Note that there are PROPs which do not come from an operad. 

\begin{prop}\label{PROP}
The graded category $\mathcal{E}$ is isomorphic to the graded PROP $\Omega(\mathcal{Q})$ freely generated by the operad  $\mathcal{Q}$.
\end{prop}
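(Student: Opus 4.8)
The plan is to exhibit a canonical morphism of graded PROPs $\Phi \colon \Omega(\mathcal{Q}) \to \mathcal{E}$ and to verify that it is bijective on every morphism space. By Definition \ref{operad}, the operad $\mathcal{Q}$ is precisely the endomorphism operad of $\mathcal{E}$ at the generating object $1$, namely the operad $n \mapsto \Hom_{\mathcal{E}}(n,1) = Ext^{*}_{\F(\gr)}(\abe, T^n \circ \abe)$, whose structure maps $\gamma$ combine the Yoneda product $\mathcal{Y}$ with the external products $\mathcal{E}^{i_1, \ldots, i_k}$. The free graded PROP $\Omega(\mathcal{Q})$ is characterized by the universal property that morphisms of graded PROPs $\Omega(\mathcal{Q}) \to \mathcal{D}$ correspond to morphisms of graded operads from $\mathcal{Q}$ to the endomorphism operad of $\mathcal{D}$; applying this to $\mathcal{D} = \mathcal{E}$ and to the identity of $\mathcal{Q}$ (which \emph{is} the endomorphism operad of $\mathcal{E}$ by Definition \ref{operad}) produces a canonical morphism $\Phi$ that is the identity on objects and on the generators $\Omega(\mathcal{Q})(1,n) = \mathcal{Q}(n)$.

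It then remains to check that $\Phi$ is bijective on each $\Hom$. Here I would use the concrete description of the free PROP on an operad (the graded analogue of \cite[Definition 1.5]{HPV}): every morphism of $\Omega(\mathcal{Q})$ between $l$ and $m$ is a monoidal product $x_{i_1} \otimes \ldots \otimes x_{i_l}$ of generating operations $x_{i_p} \in \mathcal{Q}(i_p)$ with $i_1 + \ldots + i_l = m$, composed with a permutation $\sigma \in \mathfrak{S}_m$, the only relations being the $\mathfrak{S}_{i_1} \times \ldots \times \mathfrak{S}_{i_l}$-equivariance. This gives a natural identification
$$\Omega(\mathcal{Q})(l,m) \simeq \underset{i_1 + \ldots + i_l = m}{\bigoplus} \big(\mathcal{Q}(i_1) \otimes \ldots \otimes \mathcal{Q}(i_l)\big) \underset{\mathfrak{S}_{i_1} \times \ldots \times \mathfrak{S}_{i_l}}{\otimes} \Z[\mathfrak{S}_m].$$
By construction $\Phi$ sends such a monoidal product to the external product $\mathcal{E}^{i_1, \ldots, i_l}(x_{i_1}, \ldots, x_{i_l})$ and the post-composition by $\sigma$ to the Yoneda action of $\sigma$. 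Consequently $\Phi$ restricted to $\Omega(\mathcal{Q})(l,m)$ is exactly the map induced by $\mathcal{E}^{i_1, \ldots, i_l}$ together with the $\mathfrak{S}_m$-action, which is $\mathfrak{S}_{i_1} \times \ldots \times \mathfrak{S}_{i_l}$-equivariant by Propositions \ref{action1} and \ref{action2}.

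Finally I would invoke the computation already carried out in the proof of Theorem \ref{Ext} and reformulated in Proposition \ref{suites-sym}: this very map is an isomorphism
$$\underset{i_1 + \ldots + i_l = m \atop i_p > 0, \forall p}{\bigoplus} \big(\mathcal{Q}(i_1) \otimes \ldots \otimes \mathcal{Q}(i_l)\big) \underset{\mathfrak{S}_{i_1} \times \ldots \times \mathfrak{S}_{i_l}}{\otimes} \Z[\mathfrak{S}_m] \xrightarrow{\ \simeq\ } Ext^{*}_{\F(\gr)}(T^l \circ \abe, T^m \circ \abe),$$
the summands with some $i_p = 0$ contributing nothing since $\mathcal{Q}(0) = Ext^{*}_{\F(\gr)}(\abe, \Z) = 0$ by Remark \ref{cas0}. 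Hence $\Phi$ is bijective on every morphism space and is an isomorphism of graded PROPs. The main obstacle is bookkeeping rather than conceptual: one must check that the concrete composition and symmetry isomorphisms of the free graded PROP $\Omega(\mathcal{Q})$ — in particular the Koszul signs coming from the grading in $g\textbf{Ab}^-$, and the correct variance relating $\mathcal{Q}(n) = \Hom_{\mathcal{E}}(n,1)$ to the generators $\Omega(\mathcal{Q})(1,n)$ — agree under $\Phi$ with the signs governing the external and Yoneda products. This is exactly the content of Propositions \ref{products}, \ref{action1} and \ref{action2}, so no computation beyond those is required.
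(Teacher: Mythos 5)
Your proposal is correct and follows essentially the same route as the paper: the paper's own proof is a two-sentence version of exactly this argument (the canonical functor $\Omega(\mathcal{Q}) \to \mathcal{E}$ from the universal property of the free PROP, bijective on objects and on morphisms by Theorem \ref{Ext}). Your write-up merely makes explicit the identification of $\Omega(\mathcal{Q})(l,m)$ with the induced sum $\bigoplus \bigl(\mathcal{Q}(i_1) \otimes \ldots \otimes \mathcal{Q}(i_l)\bigr) \otimes_{\mathfrak{S}_{i_1} \times \ldots \times \mathfrak{S}_{i_l}} \Z[\mathfrak{S}_m]$ and the vanishing of the $i_p = 0$ summands, details the paper leaves implicit.
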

\begin{proof}
By definition of the graded PROP freely generated by an operad we have a canonical functor
$$\Omega(\mathcal{Q}) \to \mathcal{E}$$
which is bijective on objects and on morphisms by Theorem \ref{Ext}. So it is an isomorphism of categories.
\end{proof}

\section{Applications}
In this section, we deduce from Theorem \ref{Ext} information on Ext-groups between some other functors of $\F(\gr)$.

The next result is used in the proof  \cite[corollaire 3.3]{DPV}.

\begin{prop}
Let $m$ and $n$ be integers such that $m \geq n > 0$. We have:
$$Ext^{m-n}_{\F(\gr)}(q_{n}(\bar{P}), T^m \circ \abe) \neq 0.$$
\end{prop}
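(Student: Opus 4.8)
The plan is to compute this group through the $I$-adic (augmentation-ideal) filtration of the Passi functor. Since $q_n(\bar P)(G)\simeq IG/I^{n+1}G$ and, for a free group, the associated graded of the $I$-adic filtration is the tensor algebra on the abelianization, one has natural isomorphisms $I^jG/I^{j+1}G\simeq T^j\circ\abe(G)$. Thus $q_n(\bar P)$ carries a finite filtration with successive quotients $T^1\circ\abe,\dots,T^n\circ\abe$, and likewise the finite quotient $q_m(\bar P)$ of $\bar P$ has quotients $T^1\circ\abe,\dots,T^m\circ\abe$. I would work with the whole of $q_m(\bar P)$ and extract the case of $q_n(\bar P)$ at the end.

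Applying $Ext^*_{\F(\gr)}(-,T^m\circ\abe)$ to the filtration of $q_m(\bar P)$ gives a spectral sequence. By Theorem \ref{Ext} the layer $T^j\circ\abe$ contributes $Ext^*_{\F(\gr)}(T^j\circ\abe,T^m\circ\abe)=\Z[Surj(m,j)]$ concentrated in the single degree $*=m-j$; as consecutive layers sit in consecutive total degrees, all nonzero terms of $E_1$ lie in one row, the only possibly nonzero differential is $d_1$, and the sequence degenerates at $E_2$. By Proposition \ref{products} this $d_1$ is the signed block-merging map $\Z[Surj(m,j)]\to\Z[Surj(m,j-1)]$ given by Yoneda multiplication with the extension class of the corresponding layer. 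Hence
$$C^\bullet\co\ \Z[Surj(m,m)]\to\Z[Surj(m,m-1)]\to\cdots\to\Z[Surj(m,1)]$$
(placed in degrees $0,\dots,m-1$) computes $Ext^*_{\F(\gr)}(q_m(\bar P),T^m\circ\abe)$.

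To identify this abutment I would use that $\bar P$ is a direct summand of the representable $P_1=\Z[\gr(\Z,-)]$ (the augmentation splits off the constant functor), hence projective; Yoneda gives $Hom_{\F(\gr)}(\bar P,T^m\circ\abe)=T^m\circ\abe(\Z)=\Z$ and $Ext^{>0}_{\F(\gr)}(\bar P,T^m\circ\abe)=0$. Because $T^m\circ\abe$ is polynomial of degree $m$, the projection $\bar P\to q_m(\bar P)$ induces an isomorphism on $Ext^*(-,T^m\circ\abe)$: its kernel $I^{m+1}(-)$ is filtered by the $T^j\circ\abe$ with $j>m$, for which $Ext^*(T^j\circ\abe,T^m\circ\abe)=0$ by Theorem \ref{Ext}. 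Consequently $C^\bullet$ has cohomology $\Z$ in degree $0$ and $0$ elsewhere, i.e. it is exact except at its leftmost term. The filtration of $q_n(\bar P)$ realizes the subcomplex of $C^\bullet$ concentrated in degrees $\geq m-n$ (and its spectral sequence degenerates for the same reason), so $Ext^{m-n}_{\F(\gr)}(q_n(\bar P),T^m\circ\abe)=\ker\big(d\co\Z[Surj(m,n)]\to\Z[Surj(m,n-1)]\big)$.

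It remains to see this kernel is nonzero. For $n=m$ it is $H^0(C^\bullet)=\Z$. For $n<m$, exactness of $C^\bullet$ at the (positive) degree $m-n$ identifies the kernel with the image of the incoming differential $\Z[Surj(m,n+1)]\to\Z[Surj(m,n)]$, so it suffices to show this merging map is nonzero. It is Yoneda multiplication with the layer class $[e_{n+1}]\in Ext^1_{\F(\gr)}(T^n\circ\abe,T^{n+1}\circ\abe)$, and applying the exactness already proved in the special case $m=n+1$ shows $\ker\big(\Z[\mathfrak{S}_{n+1}]\to\Z[Surj(n+1,n)]\big)=H^0=\Z$ has codimension $(n+1)!-1\geq 1$, so $[e_{n+1}]\neq 0$. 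Since a surjection is right-cancellable, the distinct merges of a fixed $f\in Surj(m,n+1)$ stay distinct, so no cancellation occurs and the map is nonzero. The main obstacle is precisely this control of the merging differential: identifying it (with its signs) via Proposition \ref{products} and proving its nonvanishing. I expect the cleanest route to the latter to be the bootstrap above---deducing $[e_{n+1}]\neq0$ from the degenerate case $m=n+1$ rather than computing the extension class by hand---while the secondary technical point is the comparison $Ext^*(\bar P,-)\simeq Ext^*(q_m(\bar P),-)$ on degree-$m$ targets.
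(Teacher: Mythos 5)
Your route is genuinely different from the paper's. The paper argues directly and more economically: it factors the trace map $tr=\sum_{\sigma\in\mathfrak{S}_n}\sigma$ on $\abe^{\otimes n}$ as $\abe^{\otimes n}\xrightarrow{g} q_n(\bar P)\xrightarrow{f}\abe^{\otimes n}$, where $g$ is the inclusion of the bottom layer $I^n/I^{n+1}$ and $f$ is the canonical generator of $Hom_{\F_n(\gr)}(q_n\bar P,\abe^{\otimes n})\simeq\Z$; hence $tr^*$ on $Ext^{m-n}_{\F(\gr)}(T^n\circ\abe,T^m\circ\abe)$ factors through $Ext^{m-n}_{\F(\gr)}(q_n(\bar P),T^m\circ\abe)$, and its non-vanishing is read off from the explicit $\mathfrak{S}_n$-action of Proposition \ref{action2} on a single well-chosen surjection (the $[\sigma\circ f]$ are pairwise distinct basis vectors, so no cancellation). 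That argument uses only the bottom layer of the $I$-adic filtration and no spectral sequence. Your argument uses the whole filtration and in exchange identifies the group as the kernel of an explicit merging differential, which is more informative; both proofs end with the same ``distinct composites cannot cancel'' observation.

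The weak point of your write-up is exactly the step you call secondary: the assertion that $\bar P\to q_m(\bar P)$ induces an isomorphism on $Ext^*_{\F(\gr)}(-,T^m\circ\abe)$. The justification offered --- the kernel $I^{m+1}(-)$ is filtered by the $T^j\circ\abe$ with $j>m$, whose $Ext$ against $T^m\circ\abe$ vanish --- is not valid: this is an \emph{infinite} decreasing filtration with trivial intersection, and vanishing of $Ext$ on the graded pieces of such a filtration does not imply vanishing on the whole object (compare $Hom_{\Z}(\Z,\Z[1/p])\neq 0$, although the filtration of $\Z$ by the $p^k\Z$ has graded pieces $\Z/p$ with $Ext^*_{\Z}(\Z/p,\Z[1/p])=0$). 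The comparison is fine in degree $0$, where it is just the adjunction $Hom(q_mF,G)\simeq Hom(F,G)$ for $G$ of degree $\leq m$, but in positive degrees it would require controlling the derived functors of $q_m$, so as stated it is a genuine gap. Fortunately you do not need it: to get $\ker\big(\Z[Surj(m,n)]\to\Z[Surj(m,n-1)]\big)\neq 0$ it suffices to have the containment $\mathrm{im}\,d^{m-n-1}\subseteq\ker d^{m-n}$, which holds simply because $C^\bullet$ is a complex, together with $d^{m-n-1}\neq 0$. The latter follows from your own bootstrap using only the degree-$0$ identification: for $m=n+1$ the degenerate spectral sequence gives $\ker\big(\Z[\mathfrak{S}_{n+1}]\to\Z[Surj(n+1,n)]\big)\simeq Hom(q_{n+1}\bar P,T^{n+1}\circ\abe)\simeq\Z$, so the first differential, i.e. Yoneda product with the layer class $[e_{n+1}]$, is non-zero, hence $[e_{n+1}]\neq 0$, and your right-cancellability argument then shows $d^{m-n-1}\neq 0$ for every $m$. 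With ``exactness'' replaced by this containment, your proof is correct.
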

\begin{proof}
Let $g \in Hom(\abe^{\otimes n}, q_n \bar{P})$ be the kernel of the map $q_n \bar{P} \to q_{n-1} \bar{P}$. More concretely, for $G \in \gr$, $g: \abe^{\otimes n}(G) \to q_n \bar{P}(G) \simeq IG/I^{n+1}G$ is given by 
$$g(g_1 \otimes \ldots \otimes g_n)=([g_1]-1) \ldots ([g_n]-1).$$

We denote by $f$ the canonical generator of $Hom_{\F_n(\gr)}(q_n \bar{P}, \abe^{\otimes n}) \simeq \Z$ i.e. the natural transformation corresponding to $1 \in \Z$ by the natural isomorphisms
$$Hom_{\F_n(\gr)}(q_n \bar{P}, \abe^{\otimes n})  \simeq Hom_{\F(\gr)}(\bar{P}, \abe^{\otimes n})  \simeq cr_1(\abe^{\otimes n})(\Z) \simeq \Z$$
where the second isomorphism is given by the Yoneda lemma and where $cr_1$ denotes the first cross-effect. More explicitly, for $G \in \gr$, the group morphism $f: IG/I^{n+1}G\simeq q_n \bar{P}(G) \to  \abe^{\otimes n}(G)$ is given by:
$$f([g]-1)=g^{\otimes n}.$$
Using the relation $([g_1]-1)([g_2]-1)=([g_1+g_2]-1)-([g_1]-1)-([g_2]-1)$ in $IG$, we obtain
$$f(([g_1]-1)([g_2]-1)\ldots ([g_n]-1))=\sum_{\sigma \in \mathfrak{S}_n} g_{\sigma(1)} \otimes  \ldots \otimes g_{\sigma(n)}.$$

We deduce that the composition $\abe^{\otimes n} \xrightarrow{g} q_n \bar{P} \xrightarrow{f} \abe^{\otimes n}$ is the trace map denoted by $tr$. This gives rise to a commutative diagram:
$$\xymatrix{
Ext^{m-n}_{\F(\gr)}(T^n \circ \abe, T^m \circ \abe) \ar[d] \ar[r]^{tr^*}&Ext^{m-n}_{\F(\gr)}(T^n \circ \abe, T^m \circ \abe)\\
Ext^{m-n}_{\F(\gr)}(q_{n}(\bar{P}), T^m \circ \abe). \ar[ur]
}$$
We prove that the map $tr^*$ is non-zero. Let $f \in Surj(m,n)$ be the surjection defined by $f(i)=i$ for $i \in \{1, \ldots, n-1 \}$ and $f(k)=n$ for $k\geq n$. By Proposition \ref{action2} we have: 
$$\overline{tr^*}(f)=\underset{\sigma \in \mathfrak{S}_n}{\sum} \epsilon_\sigma [\sigma \circ f]=[f]+ \underset{\sigma \in \mathfrak{S}_n \setminus \{Id\}}{\sum} \epsilon_\sigma [\sigma \circ f] \neq 0$$
where $\epsilon_\sigma \in \{-1, 1\}$ and $\overline{tr^*}(f): \Z[Surj(m,n)] \to \Z[Surj(m,n)]$ is the map induced by $tr^*$ via the isomorphism in Proposition \ref{Ext}.

So, we deduce from the previous commutative diagram that $Ext^{m-n}_{\F(\gr)}(q_{n}(\bar{P}), T^m \circ \abe)$ is non zero.
\end{proof}

In the rest of this section we deduce from Theorem \ref{Ext} the computation of  Ext-groups between certain functors from $\gr$ to the category of $\mathbb{Q}$-modules. 

For $M$ an abelian group, recall that $S^n(M)=(T^n(M))_{\mathfrak{S}_n}$ where $\mathfrak{S}_n$ acts by the permutation of variables and $\Lambda^n(M) \otimes \Z[1/2]=(T^n(M) \otimes \Z[1/2])_{\mathfrak{S}_n}$ where $\mathfrak{S}_n$ acts by the permutation of variables and the multiplication by the signature.

As $\mathbb{Q}[\mathfrak{S}_n]$ is semi-simple, the functors $(S^n \circ \abe) \otimes \mathbb{Q}$ and $(\Lambda^n \circ \abe) \otimes \mathbb{Q}$ are direct summands of the functor $(T^n \circ \abe) \otimes \mathbb{Q}$. This allows us to obtain the rational computations in  the following theorem.

\begin{thm} \label{Ext-ext}
Let $n$ and $m$ be natural integers, we have isomorphisms:

$$Ext^{*}_{\F(\gr)}((\Lambda^n \circ \abe) \otimes \mathbb{Q}, (\Lambda^m \circ \abe) \otimes \mathbb{Q}) \simeq \left\lbrace\begin{array}{ll}
 \mathbb{Q}^{\rho(m,n)} & \text{if } *=m-n\\
 0 & \text{otherwise}
 \end{array}
 \right.$$
 where $\rho(m,n)$ denotes the number of partitions of $m$ into $n$  parts.
 
 $$Ext^{*}_{\F(\gr)}((S^n \circ \abe) \otimes \mathbb{Q}, (S^m \circ \abe) \otimes \mathbb{Q}) \simeq \left\lbrace\begin{array}{ll}
  \mathbb{Q}& \text{if } n=m \text{ and } *=0\\
   0 & \text{otherwise}
 \end{array}
 \right.$$

 $$Ext^{*}_{\F(\gr)}((\Lambda^n \circ \abe) \otimes \mathbb{Q}, (S^m \circ \abe) \otimes \mathbb{Q})\simeq \left\lbrace\begin{array}{ll}
   \mathbb{Q}& \text{if } n=m=0 \text{ and } *=0\\
 & \text{or }  n=m=1 \text{ and } *=0\\
 0 & \text{otherwise}
 \end{array}
 \right.$$

 $$Ext^{*}_{\F(\gr)}((S^n \circ \abe) \otimes \mathbb{Q}, (\Lambda^m \circ \abe) \otimes \mathbb{Q}) \simeq \left\lbrace\begin{array}{ll}
   \mathbb{Q}& \text{if } n=m=0 \text{ and } *=0\\
 & \text{or }  n=1 \text{ and } *=m-1\\
 0 & \text{otherwise}
 \end{array}
 \right.$$
 
 $$Ext^{*}_{\F(\gr)}((\Lambda^n \circ \abe) \otimes \mathbb{Q}, (T^m \circ \abe) \otimes \mathbb{Q}) \simeq \left\lbrace\begin{array}{ll}
 \mathbb{Q}^{S(m,n)} & \text{if } *=m-n\\
 0 & \text{otherwise}
 \end{array}
 \right.$$
  
 $$Ext^{*}_{\F(\gr)}((S^n \circ \abe) \otimes \mathbb{Q}, (T^m \circ \abe) \otimes \mathbb{Q}) \simeq \left\lbrace\begin{array}{ll}
 \mathbb{Q}^{S(m,n)} & \text{if } *=m-n\\
 0 & \text{otherwise}
 \end{array}
 \right.$$
 where $S(m,n)$ denotes the Stirling partition number (i.e. the number of ways to partition a set of $m$ elements into $n$ non-empty subsets).

 $$Ext^{*}_{\F(\gr)}((T^n \circ \abe) \otimes \mathbb{Q}, (S^m \circ \abe) \otimes \mathbb{Q}) \simeq \left\lbrace\begin{array}{ll}
 \mathbb{Q}^{r(m,n)} & \text{if } m=n \text{\ and }*=0\\
 0 & \text{otherwise}
 \end{array}
 \right.$$
 
 and 
  $$Ext^{*}_{\F(\gr)}((T^n \circ \abe) \otimes \mathbb{Q}, (\Lambda^m \circ \abe) \otimes \mathbb{Q}) \simeq \left\lbrace\begin{array}{ll}
 \mathbb{Q}^{r(m,n)} & \text{if } *=m-n \\
 0 & \text{otherwise}
 \end{array}
 \right.$$
 where $r(m,n)$ denotes the number of ordered partition of $m$ into $n$ parts.

 \end{thm}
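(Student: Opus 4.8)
The plan is to feed the $\mathfrak{S}_n\times\mathfrak{S}_m$-equivariant identification $Ext^{m-n}_{\F(\gr)}(T^n\circ\abe,T^m\circ\abe)\simeq\Z[Surj(m,n)]$ of Theorem \ref{Ext}, together with the explicit actions of Propositions \ref{action1} and \ref{action2}, into the semisimple machinery recalled just above the statement. Since $\mathbb{Q}[\mathfrak{S}_k]$ is semisimple, $(S^k\circ\abe)\otimes\mathbb{Q}$ and $(\Lambda^k\circ\abe)\otimes\mathbb{Q}$ are the images of the idempotents $e_k^{+}=\frac1{k!}\sum_\sigma\sigma$ and $e_k^{-}=\frac1{k!}\sum_\sigma\epsilon(\sigma)\sigma$ in $(T^k\circ\abe)\otimes\mathbb{Q}$. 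First I would record that $Ext^*$ is additive in each variable and that, the groups of Theorem \ref{Ext} being free of finite rank, tensoring with $\mathbb{Q}$ gives $Ext^*_{\F(\gr)}((T^n\circ\abe)\otimes\mathbb{Q},(T^m\circ\abe)\otimes\mathbb{Q})\simeq\mathbb{Q}[Surj(m,n)]$, concentrated in degree $m-n$. Each of the eight groups is then the image of the idempotent $Ext(e_n^{\pm},e_m^{\pm})$ (taking the whole space on the side carrying a $T$), i.e. a prescribed $\mathfrak{S}_n$-isotypic times $\mathfrak{S}_m$-isotypic piece of $\mathbb{Q}[Surj(m,n)]$ for the actions of Proposition \ref{action2}. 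All the remaining work is the representation-theoretic computation of these multiplicities.

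For the six cases in which one variable is $T$ I would argue directly. When $T$ is in the source ($\Lambda^n\to T^m$ and $S^n\to T^m$), the group $\mathfrak{S}_n$ acts \emph{freely} on $Surj(m,n)$, since a surjection is rigidified by its fibres; the orbits thus correspond to unordered partitions of an $m$-element set into $n$ non-empty blocks, of which there are $S(m,n)$, and a trace computation (only the identity of $\mathfrak{S}_n$ has fixed points) shows that each free orbit contributes exactly one dimension to both the trivial- and the sign-isotypic subspace regardless of the signs in Proposition \ref{action2}; this yields $\mathbb{Q}^{S(m,n)}$ in both cases. When $T$ is in the target ($T^n\to\Lambda^m$ and $T^n\to S^m$) I would instead use the $\mathfrak{S}_m$-decomposition $\mathbb{Q}[Surj(m,n)]\simeq\bigoplus_c\mathrm{Ind}^{\mathfrak{S}_m}_{\mathfrak{S}_{i_1}\times\cdots\times\mathfrak{S}_{i_n}}(\mathrm{sgn}^{\boxtimes n})$ coming from the proof of Theorem \ref{Ext}, indexed by compositions $c=(i_1,\dots,i_n)$ of $m$ into positive parts; Frobenius reciprocity gives multiplicity one in the sign-isotypic part per composition, hence $\mathbb{Q}^{r(m,n)}$, and multiplicity one in the trivial-isotypic part only when every $i_k=1$, i.e. exactly when $m=n$. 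The two cases $S^n\to S^m$ and $\Lambda^n\to S^m$ then follow at once, because the target-trivial-isotypic space is nonzero only for $m=n$, where it is one-dimensional with trivial residual $\mathfrak{S}_n$-action: it survives $e_n^{+}$ always (giving $\mathbb{Q}$ iff $m=n$) and $e_n^{-}$ only when $\mathfrak{S}_n$ is trivial (giving $\mathbb{Q}$ iff $m=n\le1$).

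The heart of the proof, and the step I expect to be the main obstacle, is the pair of cases with $\Lambda$ in the target, where both idempotents are nontrivial and the residual $\mathfrak{S}_n$-action on the target-sign-isotypic space $V^{-}$ must be pinned down \emph{with its signs}. Here $V^{-}\simeq\bigoplus_c\mathbb{Q}\cdot g_c$ has one generator per composition, each built from the antisymmetrizer and the graded generators of $Ext^{i_k-1}_{\F(\gr)}(\abe,T^{i_k}\circ\abe)=\Lambda(i_k)[i_k-1]$; the $\mathfrak{S}_n$-action permutes the $g_c$ with a sign that combines the Koszul sign $(-1)^{(i_k-1)(i_l-1)}$ of transposing two graded factors (the sign of Proposition \ref{action2}) with the block-transposition sign $(-1)^{i_ki_l}$ forced on the antisymmetrizer by the symmetric-sequence symmetry of Proposition \ref{suites-sym}. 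I would verify that these combine to $(-1)^{i_k+i_l+1}$, so that transposing two equal blocks of any size $j$ contributes $-1$; consequently, grouping compositions by their shape $\lambda$ (a partition of $m$ into $n$ parts, with part-multiplicities $m_1,m_2,\dots$) realizes $V^{-}\simeq\bigoplus_\lambda\mathrm{Ind}^{\mathfrak{S}_n}_{\prod_j\mathfrak{S}_{m_j}}(\mathrm{sgn})$. A final application of Frobenius reciprocity reads off the sign-isotypic multiplicity as one per $\lambda$, giving $\mathbb{Q}^{\rho(m,n)}$ for $\Lambda^n\to\Lambda^m$, and the trivial-isotypic multiplicity by the same pairing for $S^n\to\Lambda^m$. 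The delicate point throughout is not the representation theory but the honest reconciliation of this combined sign with Proposition \ref{action2} after re-expressing $e_m^{-}[\tau_{k,l}\circ f]$ in the composition basis; that bookkeeping is where all the effort lies, and I would isolate it as a lemma before invoking Frobenius reciprocity uniformly. Finally, the four groups with $T$, $S$ or $\Lambda$ in the source against a symmetric or exterior target ($T^n\to S^m$, $T^n\to\Lambda^m$) are transcribed by running the identical isotypic analysis on the source factor, and the Tor statements follow by the usual duality.
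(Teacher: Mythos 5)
Your reduction coincides with the paper's: semisimplicity of $\mathbb{Q}[\mathfrak{S}_k]$ identifies each of the eight groups with an isotypic component of $\mathbb{Q}[Surj(m,n)]$ for the signed actions of Propositions \ref{action1} and \ref{action2}. The difference is the engine used to extract multiplicities: the paper argues orbit by orbit, writing down generators of the stabiliser of a surjection and evaluating the sign character $\alpha$ on them (Lemma \ref{lm-tech}), while you decompose $\mathbb{Q}[Surj(m,n)]$ into modules induced from $\mathrm{sgn}^{\boxtimes n}$ over compositions (for $\mathfrak{S}_m$) and then from composition-stabilisers (for the residual $\mathfrak{S}_n$-action on $V^-$), and invoke Frobenius reciprocity. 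These are equivalent, and your sign analysis is correct where you carry it out: $(i_k-1)(i_l-1)+i_ki_l\equiv i_k+i_l+1 \pmod 2$, so a transposition of two equal blocks of size $j$ acts by $-1$ on the corresponding generator of $V^-$, giving $V^-\simeq\bigoplus_\lambda\mathrm{Ind}^{\mathfrak{S}_n}_{\prod_j\mathfrak{S}_{m_j}}(\mathrm{sgn})$. (For unequal, non-adjacent blocks the block-transposition sign also involves the intervening blocks, but only the stabiliser character enters the induced-module identification, so nothing is affected.)

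The problem is the case you leave as ``read off by the same pairing'', namely $S^n\to\Lambda^m$. Frobenius reciprocity there gives $\dim\mathrm{Hom}_{\prod_j\mathfrak{S}_{m_j}}(\mathrm{sgn},\mathrm{triv})$, which equals $1$ precisely when all parts of $\lambda$ are distinct; summing over $\lambda$, your method yields $\mathbb{Q}$ raised to the number of partitions of $m$ into $n$ pairwise \emph{distinct} parts, in degree $m-n$. This contradicts the fourth display, which asserts vanishing for all $n\geq 2$: already for $(n,m)=(2,3)$ the $\mathfrak{S}_3$-sign part $V^-$ is two-dimensional with basis indexed by the compositions $(1,2)$ and $(2,1)$, the $\mathfrak{S}_2$-action swaps the two basis vectors up to sign, and hence both the trivial and the sign $\mathfrak{S}_2$-isotypic pieces are one-dimensional. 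The fault is not in your method: the paper's own argument for this case deduces emptiness of the surviving set from $\alpha((\tau_{i,j},\sigma^{-1}T_{i,j}\sigma),f)=-1$, but such elements lie in the stabiliser of $f$ only when two fibres of $f$ have equal cardinality, so surjections with pairwise distinct fibre sizes are not excluded. Your outline therefore cannot be completed into a proof of the fourth isomorphism as printed (it proves a different, corrected value for that group); the remaining seven displays all check out against your computation.
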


 The proof of this theorem relies on the following lemma:
 \begin{lm} \label{lm-tech}
 Let $G$ be a group and $E$ a $G$-set. Consider a $\mathbb{Q}$-linear  action of $G$ on $\mathbb{Q}[E]$, such that 
 $$g. [e]=\alpha(g,e)[g.e]$$
 where $\alpha: G \times E \to \{1, -1 \}$ is a map, then
 $$\mathbb{Q}[E]_G\simeq \mathbb{Q}[\{e \in E \mid \forall g \in G \textrm{ s.t. } g.e=e \textrm{ we have } \alpha(g,e)=1 \}/G].$$
 \end{lm}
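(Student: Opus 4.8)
The plan is to reduce to the case of a single orbit and then to decide, orbit by orbit, whether the coinvariants vanish. First I would record the consequence of associativity of the action: writing the relation $g\cdot[e]=\alpha(g,e)[g\cdot e]$ for a product $gh$ in two ways yields the cocycle identity $\alpha(gh,e)=\alpha(g,h\cdot e)\,\alpha(h,e)$, together with $\alpha(1,e)=1$ and $\alpha(g^{-1},g\cdot e)=\alpha(g,e)$ (the last because both sides are $\pm1$ and their product is $\alpha(g^{-1}g,e)=\alpha(1,e)=1$). In particular, restricting to the stabilizer $G_e=\{g\in G\mid g\cdot e=e\}$, the map $\chi_e\colon G_e\to\{\pm1\}$, $h\mapsto\alpha(h,e)$, is a group homomorphism, so the good set $E^{\mathrm{good}}=\{e\in E\mid \chi_e\ \text{is trivial}\}$ is exactly the set appearing in the statement.

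Next I would check that $E^{\mathrm{good}}$ is $G$-stable, so that the quotient $E^{\mathrm{good}}/G$ makes sense and indexes the right-hand side. For $e'=g\cdot e$ one has $G_{e'}=gG_eg^{-1}$, and a short computation with the cocycle identity (using $\alpha(g,e)\,\alpha(g^{-1},g\cdot e)=1$) gives $\chi_{e'}(ghg^{-1})=\chi_e(h)$ for all $h\in G_e$; hence $\chi_{e'}$ is trivial iff $\chi_e$ is, so goodness is constant along orbits. Since the coinvariants functor $M\mapsto M_G=\mathbb{Q}\otimes_{\mathbb{Q}[G]}M$ is right exact and commutes with arbitrary direct sums, the orbit decomposition $E=\bigsqcup_{\mathcal O}\mathcal O$ gives $\mathbb{Q}[E]_G\cong\bigoplus_{\mathcal O}\mathbb{Q}[\mathcal O]_G$, reducing the problem to the computation of the coinvariants of each transitive piece.

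For a single orbit $\mathcal O$ I would fix a base point $e_0$ and observe that $\mathbb{Q}[\mathcal O]_G$ is a quotient of $\mathbb{Q}$: for $e=g\cdot e_0$ the coinvariant relations force $[e]\equiv\alpha(g,e_0)[e_0]$, so the class of $[e_0]$ generates. If $\chi_{e_0}$ is nontrivial, choose $h\in G_{e_0}$ with $\alpha(h,e_0)=-1$; then $[e_0]\equiv h\cdot[e_0]=-[e_0]$, and since we work over $\mathbb{Q}$ (this is the one place the hypothesis of characteristic $\neq2$ is used) we get $[e_0]=0$, whence $\mathbb{Q}[\mathcal O]_G=0$. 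If $\chi_{e_0}$ is trivial, I would exhibit a nonzero $G$-invariant functional $\varphi\colon\mathbb{Q}[\mathcal O]\to\mathbb{Q}$ by $\varphi([g\cdot e_0])=\alpha(g,e_0)$: the cocycle identity together with triviality of $\chi_{e_0}$ makes $\varphi$ well defined on cosets and $G$-invariant, so it descends to a nonzero map on the coinvariants, forcing $\mathbb{Q}[\mathcal O]_G\cong\mathbb{Q}$. Summing over all orbits then yields $\mathbb{Q}[E]_G\cong\mathbb{Q}[E^{\mathrm{good}}/G]$, which is the claim. I expect the main obstacle to be the sign bookkeeping in the cocycle identity, in particular the well-definedness and invariance of $\varphi$; by contrast the vanishing in the nontrivial case is immediate once one uses that the coefficients lie in $\mathbb{Q}$.
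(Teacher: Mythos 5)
Your proof is correct and complete. Note that the paper itself states this lemma without any proof (it is immediately followed by the proof of Theorem \ref{Ext-ext}), so there is no argument of the author's to compare yours against; you have supplied the standard argument that the author evidently regarded as routine. All the key points are in order: the cocycle identity $\alpha(gh,e)=\alpha(g,h\cdot e)\alpha(h,e)$ forced by associativity, the fact that $\chi_e$ is a character of the stabilizer whose triviality is constant along orbits (so the quotient in the statement makes sense), the reduction to a single orbit via exactness properties of coinvariants, the vanishing of $\mathbb{Q}[\mathcal O]_G$ when $\chi_{e_0}$ is nontrivial (using $2$ invertible), and the invariant functional $\varphi([g\cdot e_0])=\alpha(g,e_0)$ detecting that the generator survives in the good case. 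This is exactly the kind of verification the lemma needs, and it is where the hypothesis that the coefficients are $\mathbb{Q}$ (rather than, say, $\mathbb{F}_2$) genuinely enters.
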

 
 \begin{proof}[Proof of Theorem \ref{Ext-ext}]
 For $F$ and $G$ functors of type $T^k$, $S^k$ or $\Lambda^k$ we determine the action of $\mathfrak{S}_n \times \mathfrak{S}_m$  on $Ext^{*}_{\F(\gr)}((T^n \circ \abe) \otimes \mathbb{Q}, (T^m \circ \abe) \otimes \mathbb{Q})$ such that 
 $$Ext^{*}_{\F(\gr)}((F \circ \abe) \otimes \mathbb{Q}, (G \circ \abe) \otimes \mathbb{Q}) \simeq Ext^{*}_{\F(\gr)}((T^n \circ \abe) \otimes \mathbb{Q}, (T^m \circ \abe) \otimes \mathbb{Q}) _{\mathfrak{S}_n \times \mathfrak{S}_m}$$
$$\simeq \mathbb{Q}[Surj(m,n)]_{\mathfrak{S}_n \times \mathfrak{S}_m}.$$
By Lemma \ref{lm-tech}, it is sufficient to determine this action for the elements $(\sigma', \sigma'')\in {\mathfrak{S}_n \times \mathfrak{S}_m}$ such that $(\sigma', \sigma'').f=f$.
 
 Let $f$ be an element of $Surj(m,n)$ such that $f=s \circ \sigma$ is the unique decomposition of $f$ of the form described in Lemma \ref{map-Omega}. The group $\mathfrak{S}_n \times \mathfrak{S}_m$ acts on $Surj(m,n)$ by $(\sigma', \sigma'').f=\sigma' \circ f \circ \sigma''^{-1}$. The elements $(\sigma', \sigma'') \in \mathfrak{S}_n \times \mathfrak{S}_m$ such that $(\sigma', \sigma'').f=f$ are products of elements of the two following forms
 
 \begin{itemize}
\item  $(\tau_{i,j}, \sigma^{-1} T_{i,j} \sigma)$ for $i$ and $j$ such that $|f^{-1}(i)|=|f^{-1}(j)|$ and $T_{i,j}$ is the permutation obtained by the bloc transposition of $s^{-1}(i)$ and $s^{-1}(j)$;
\item $(Id, \sigma_{i_1} \ldots \sigma_{i_n} )$ where $ \sigma_{i_k}$ is a permutation of $f^{-1}(k)$.
\end{itemize}

In the following, for $F$ and $G$ functors of type $T^k$, $S^k$ or $\Lambda^k$ and $(\sigma', \sigma'') \in \mathfrak{S}_n \times \mathfrak{S}_m$ of the previous forms, we determine the map $\alpha: (\mathfrak{S}_n \times \mathfrak{S}_m) \times Surj(m,n) \to \{1, -1 \}$ such that $g. [e]=\alpha(g,e)[g.e]$.

 - For $F=\Lambda^n$ and $G=\Lambda^m$, we have:
$$\alpha((\tau_{i,j}, \sigma^{-1} T_{i,j} \sigma),f)= \epsilon(\tau_{i,j}) \epsilon(\sigma^{-1} T_{i,j} \sigma) \underset{1\leq i\leq n}{\Pi}\epsilon(\overline{\sigma^{-1} T_{i,j} \sigma_{\mid(f\circ (\sigma^{-1} T_{i,j} \sigma)^{-1}(i)}}) (-1)^{(|f^{-1}(i)|-1)^2}=1$$
since
$$\epsilon(\sigma^{-1} T_{i,j} \sigma) =\epsilon(T_{i,j}) =(-1)^{|f^{-1}(i)|^2}$$
and
$$\epsilon(\overline{\sigma^{-1} T_{i,j} \sigma_{\mid(f\circ (\sigma^{-1} T_{i,j} \sigma)^{-1}(i)}})=1;$$
and
$$\alpha((Id, \sigma_{i_1} \ldots \sigma_{i_n} ),f)= \epsilon( \sigma_{i_1} \ldots \sigma_{i_n})  \underset{1\leq i\leq n}{\Pi}\epsilon(\overline{{\sigma}_{i_1} \ldots {\sigma_{i_n}}_{\mid(f\circ (\sigma_{i_1} \ldots \sigma_{i_n} )^{-1}(i)}}) =1$$
since
$$\epsilon(\sigma_{i_1} \ldots \sigma_{i_n})  = \epsilon(\sigma_{i_1}) \ldots \epsilon(\sigma_{i_n}) $$
and
$$\epsilon(\overline{\sigma_{i_1} \ldots {\sigma_{i_n}}_{\mid(f\circ (\sigma_{i_1} \ldots \sigma_{i_n} )^{-1}(k)}})=\epsilon(\sigma_{i_k}).$$

By Lemma \ref{lm-tech} we deduce that
$$Ext^{*}_{\F(\gr)}((\Lambda^n \circ \abe) \otimes \mathbb{Q}, (\Lambda^m \circ \abe) \otimes \mathbb{Q})  \simeq \mathbb{Q}[Surj(m,n)]_{\mathfrak{S}_n \times \mathfrak{S}_m} \simeq \mathbb{Q}[Surj(m,n)/{\mathfrak{S}_n \times \mathfrak{S}_m}] $$
$$\simeq \left\lbrace\begin{array}{ll}
 \mathbb{Q}^{\rho(m,n)} & \text{if } *=m-n\\
 0 & \text{otherwise}
 \end{array}
 \right.$$
- For $F=S^n$ and $G=S^m$, 
we have $$\alpha((\tau_{i,j}, \sigma^{-1} T_{i,j} \sigma),f)=(-1)^{|f^{-1}(i)|^2+1},$$
$$\alpha((Id, \sigma_{i_1} \ldots \sigma_{i_n}),f)= \epsilon(\sigma_{i_1}) \ldots \epsilon(\sigma_{i_n}).$$
So, if $n \neq m$ 
$$\{f \in Surj(m,n)\mid \forall (\sigma', \sigma'')\in {\mathfrak{S}_n \times \mathfrak{S}_m} \textrm{ s.t. } (\sigma', \sigma'').f=f \textrm{ we have } \alpha((\sigma', \sigma''),f)=1 \}=\emptyset$$
and if $n=m$
$$\{f \in Surj(m,n)\mid \forall (\sigma', \sigma'')\in {\mathfrak{S}_n \times \mathfrak{S}_m} \textrm{ s.t. } (\sigma', \sigma'').f=f \textrm{ we have } \alpha((\sigma', \sigma''),f)=1 \}$$
$$=Surj(m,n).$$
- For $F=S^n$ and $G=\Lambda^m$,

$$\alpha((\tau_{i,j}, \sigma^{-1} T_{i,j} \sigma),f)= -1,$$
$$\alpha((Id, \sigma_{i_1} \ldots \sigma_{i_n} ),f)= 1.$$
So, if $n \neq 1$ or $n=0$ and $m \neq 0$
$$\{f \in Surj(m,n)\mid \forall (\sigma', \sigma'')\in {\mathfrak{S}_n \times \mathfrak{S}_m} \textrm{ s.t. } (\sigma', \sigma'').f=f \textrm{ we have } \alpha((\sigma', \sigma''),f)=1 \}=\emptyset$$
and if $n=1$ or $n=m=0$
$$\{f \in Surj(m,n)\mid \forall (\sigma', \sigma'')\in {\mathfrak{S}_n \times \mathfrak{S}_m} \textrm{ s.t. } (\sigma', \sigma'').f=f \textrm{ we have } \alpha((\sigma', \sigma''),f)=1 \}$$
is a set of cardinality $1$.

- For $F=\Lambda^n$ and $G=S^m$,
$$\alpha((\tau_{i,j}, \sigma^{-1} T_{i,j} \sigma),f)=(-1)^{|f^{-1}(i)|^2},$$
$$\alpha((Id, \sigma_{i_1} \ldots \sigma_{i_n}),f)= \epsilon(\sigma_{i_1}) \ldots \epsilon(\sigma_{i_n}).$$

- For $F=\Lambda^n$ and $G=T^m$, for $f \in Surj(m,n)$ and $\sigma' \in \mathfrak{S}_n$ we have $\sigma'.f \neq f$, so 
$$ \mathbb{Q}[Surj(m,n)]_{\mathfrak{S}_n}= \mathbb{Q}[Surj(m,n)/\mathfrak{S}_n]\simeq \mathbb{Q}^{S(m,n)}$$
 where $S(m,n)$ denotes the Stirling partition number.
 
 - For $F=S^n$ and $G=T^m$, for $f \in Surj(m,n)$ and $\sigma' \in \mathfrak{S}_n$ we have $\sigma'.f \neq f$, so 
$$ \mathbb{Q}[Surj(m,n)]_{\mathfrak{S}_n}= \mathbb{Q}[Surj(m,n)/\mathfrak{S}_n]\simeq \mathbb{Q}^{S(m,n)}.$$

 - For $F=T^n$ and $G=S^m$. We consider the action of $\mathfrak{S}_m$ on $Ext^{*}_{\F(\gr)}((T^n \circ \abe) \otimes \mathbb{Q}, (T^m \circ \abe) \otimes \mathbb{Q})$. We have:
 $$\alpha( \sigma_{i_1} \ldots \sigma_{i_n},f)= \epsilon(\sigma_{i_1}) \ldots \epsilon(\sigma_{i_n}).$$
So, if $n \neq m$ 
$$\{f \in Surj(m,n)\mid \forall \sigma''\in \mathfrak{S}_m \textrm{ s.t. }\sigma''.f=f \textrm{ we have } \alpha(\sigma'',f)=1 \}=\emptyset$$
and if $n=m$
$$\{f \in Surj(m,n)\mid \forall \sigma'' \in \mathfrak{S}_m\textrm{ s.t. } \sigma''.f=f \textrm{ we have } \alpha(\sigma'',f)=1 \}=Surj(m,n)$$
so 
$$ \mathbb{Q}[Surj(m,n)]_{\mathfrak{S}_m}= \mathbb{Q}[Surj(m,n)/\mathfrak{S}_m]\simeq \mathbb{Q}^{r(m,n)}$$
where $r(m,n)$ is the number of ordered partitions of $m$ into $n$ parts.

 - For $F=T^n$ and $G=\Lambda^m$. We have:
 $$\alpha( \sigma_{i_1} \ldots \sigma_{i_n},f)= (\epsilon(\sigma_{i_1}) \ldots \epsilon(\sigma_{i_n}))^2=1.$$
So,
$$\{f \in Surj(m,n)\mid \forall \sigma'' \in \mathfrak{S}_m\textrm{ s.t. } \sigma''.f=f \textrm{ we have } \alpha(\sigma'',f)=1 \}=Surj(m,n)$$
so 
$$ \mathbb{Q}[Surj(m,n)]_{\mathfrak{S}_m}= \mathbb{Q}[Surj(m,n)/\mathfrak{S}_m]\simeq \mathbb{Q}^{r(m,n)}$$
where $r(m,n)$ is the number of ordered partitions of $m$ into $n$ parts.

 \end{proof}

We deduce from Theorem \ref{Ext-ext} computations of rational $Tor$-groups. In fact, for $N: \textbf{gr}^{op} \to \mathbb{Q}\text{-}Mod$ we denote by $N^\vee$ the postcomposition of $N$ with the duality functor $V \mapsto Hom(V,\mathbb{Q})$. The $Tor$-groups can be deduced from the following natural graded isomorphism:
\begin{equation} \label{Ext-Tor}
Hom(Tor_\bullet^{\textbf{gr}}(N,M), \mathbb{Q}) \simeq Ext_{\mathcal{F}(\textbf{gr})}^\bullet(M,N^\vee)
\end{equation}
where $M: \textbf{gr} \to \mathbb{Q}\text{-}Mod$ 
(see \cite[Appendice A]{DV}).

For $N: \gr^{op} \to \mathbb{Q}\text{-}Mod$, $F_n$ the free group in $n$ generators and an integer $i$, Djament has shown \cite[Th\'eor\`eme $1.10$]{D15} that there is an isomorphism:
\begin{equation} \label{Djament-thm}
\underset{n \in \mathbb{N}}{\text{colim}}\ H_i(Aut(F_n), N(F_n)) \simeq \underset{k+l=i}{\bigoplus}{Tor}^\gr_k(N, \Lambda^l \circ(\mathfrak{a}\otimes \mathbb{Q})).
\end{equation}
As an application of Theorem \ref{Ext-ext}, using the previous result of Djament, we obtain the following computations of stable homology:

\begin{thm} 
Let $H_n=Hom_{\Gr}(F_n, \mathbb{Q})$.
We have
$$\underset{n \in \mathbb{N}}{\text{colim}}\ H_*(Aut(F_n), T^d\circ(\mathfrak{a}\otimes \mathbb{Q})(H_n)) \simeq  \left\lbrace\begin{array}{ll}
   \mathbb{Q}^{B(d)}& \text{if } *=d\\
 0 & \text{otherwise}
 \end{array}
 \right.$$
where $B(d)$ denotes the $d$th Bell number (i.e. the number of partitions of a set of $d$ elements);

$$\underset{n \in \mathbb{N}}{\text{colim}}\ H_*(Aut(F_n), \Lambda^d\circ(\mathfrak{a}\otimes \mathbb{Q})(H_n)) \simeq  \left\lbrace\begin{array}{ll}
   \mathbb{Q}^{\rho(d)}& \text{if } *=d\\
 0 & \text{otherwise}
 \end{array}
 \right.$$
where $\rho(d)$ denotes the number of partitions of $d$, and
$$\underset{n \in \mathbb{N}}{\text{colim}}\ H_*(Aut(F_n), S^d\circ(\mathfrak{a}\otimes \mathbb{Q})(H_n)) \simeq  \left\lbrace\begin{array}{ll}
   \mathbb{Q}& \text{if } *=d=0 \\
    & \text{or } *=d=1 \\
 0 & \text{otherwise}
 \end{array}
 \right.$$
where $S^d$ is the $d$-th symmetric power.
\end{thm}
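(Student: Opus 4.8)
The plan is to derive all three formulas by feeding the identification of the coefficient functor into Djament's theorem~(\ref{Djament-thm}), then converting the resulting $Tor$-groups into the $Ext$-groups computed in Theorem~\ref{Ext-ext} via the duality~(\ref{Ext-Tor}); the only case-specific input is a combinatorial summation of dimensions. First I would identify the contravariant coefficient. Writing $F$ for one of $T^d$, $\Lambda^d$, $S^d$ and setting $M=F\circ(\mathfrak{a}\otimes\mathbb{Q})$, the coefficient is $N(F_n)=M(H_n)$ with $H_n=\Hom_{\Gr}(F_n,\mathbb{Q})$. Since every homomorphism $F_n\to\mathbb{Q}$ factors through the abelianization, there is a natural identification $H_n\simeq\Hom_{\mathbb{Q}}\big((\mathfrak{a}\otimes\mathbb{Q})(F_n),\mathbb{Q}\big)=\big((\mathfrak{a}\otimes\mathbb{Q})(F_n)\big)^{*}$ exhibiting $H_n$ as the $\mathbb{Q}$-linear dual of $(\mathfrak{a}\otimes\mathbb{Q})(F_n)$. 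Because we work over $\mathbb{Q}$ on finite-dimensional spaces, each of $T^d$, $\Lambda^d$, $S^d$ commutes with linear duality, so $N(F_n)\simeq\big(M(F_n)\big)^{*}$; dualizing once more and using finite-dimensionality gives $N^{\vee}\simeq M=F\circ(\mathfrak{a}\otimes\mathbb{Q})$. These are polynomial functors, so Djament's hypotheses are met.

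Next I would substitute into the two structural isomorphisms. Djament's theorem~(\ref{Djament-thm}) gives
$$\underset{n \in \mathbb{N}}{\text{colim}}\ H_i(Aut(F_n),N(F_n)) \simeq \underset{k+l=i}{\bigoplus} Tor^{\gr}_k\big(N,\Lambda^l\circ(\mathfrak{a}\otimes\mathbb{Q})\big),$$
and the duality~(\ref{Ext-Tor}) identifies the dimension of each $Tor^{\gr}_k\big(N,\Lambda^l\circ(\mathfrak{a}\otimes\mathbb{Q})\big)$ with that of $Ext^{k}_{\F(\gr)}\big((\Lambda^l\circ\mathfrak{a})\otimes\mathbb{Q},\,N^{\vee}\big)=Ext^{k}_{\F(\gr)}\big((\Lambda^l\circ\mathfrak{a})\otimes\mathbb{Q},\,(F\circ\mathfrak{a})\otimes\mathbb{Q}\big)$, which is precisely what Theorem~\ref{Ext-ext} computes. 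In all three cases this $Ext$-group is concentrated in cohomological degree $k=d-l$, so only summands with $k+l=d$ survive; this forces the total homology to vanish for $i\neq d$ uniformly.

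Finally I would sum the surviving dimensions in degree $i=d$. For $F=T^d$ the group in degree $k=d-l$ has dimension $S(d,l)$, so the total is $\sum_{l}S(d,l)=B(d)$, the $d$-th Bell number; for $F=\Lambda^d$ the dimension is $\rho(d,l)$, giving $\sum_{l}\rho(d,l)=\rho(d)$; and for $F=S^d$, Theorem~\ref{Ext-ext} shows $Ext^{*}_{\F(\gr)}((\Lambda^l\circ\mathfrak{a})\otimes\mathbb{Q},(S^d\circ\mathfrak{a})\otimes\mathbb{Q})$ is nonzero only for $(l,d)\in\{(0,0),(1,1)\}$, each contributing one copy of $\mathbb{Q}$ in degree $i=d$, yielding the last formula. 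The delicate point is the identification in the first step: one must check that passing to $\mathbb{Q}$-linear duals carries the covariant functor $F\circ(\mathfrak{a}\otimes\mathbb{Q})$ exactly onto the contravariant coefficient $N$, so that $N^{\vee}$ is literally the functor appearing as the second argument in Theorem~\ref{Ext-ext}; this rests on the compatibility of $T^d$, $\Lambda^d$ and $S^d$ with duality, which holds in characteristic zero. Once this is secured the rest is formal, the remaining content being the identities $\sum_l S(d,l)=B(d)$ and $\sum_l \rho(d,l)=\rho(d)$.
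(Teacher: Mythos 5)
Your proposal is correct and follows essentially the same route as the paper: identify the contravariant coefficient as $N=F\circ(\mathfrak{a}\otimes\mathbb{Q})\circ Hom_{\Gr}(-,\mathbb{Q})$, apply Djament's isomorphism (\ref{Djament-thm}), convert via the duality (\ref{Ext-Tor}) to the $Ext$-groups of Theorem \ref{Ext-ext}, and sum dimensions using $\sum_l S(d,l)=B(d)$ and $\sum_l\rho(d,l)=\rho(d)$. You are in fact more explicit than the paper about the key point that $N^{\vee}\simeq F\circ(\mathfrak{a}\otimes\mathbb{Q})$ because $T^d$, $\Lambda^d$, $S^d$ commute with $\mathbb{Q}$-linear duality on finite-dimensional spaces, a step the paper leaves implicit.
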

\begin{proof}
For $F: \gr \to \mathbb{Q}\text{-Mod}$ such that for all $k \in \mathbb{N}$ $$Ext_{\mathcal{F}(\textbf{gr})}^k(\Lambda^l \circ(\mathfrak{a}\otimes \mathbb{Q}),F\circ(\mathfrak{a}\otimes \mathbb{Q}))$$ is a finite vector space, we have the isomorphisms
$$\underset{n \in \mathbb{N}}{\text{colim}}\ H_i(Aut(F_n), F\circ(\mathfrak{a}\otimes \mathbb{Q})(H_n)) \simeq  \underset{k+l=i}{\bigoplus}{Tor}^\gr_k(F\circ(\mathfrak{a}\otimes \mathbb{Q})\circ Hom_{\Gr}(-, \mathbb{Q}), \Lambda^l \circ(\mathfrak{a}\otimes \mathbb{Q}))
$$
$$\simeq  \underset{k+l=i}{\bigoplus}Hom(Ext_{\mathcal{F}(\textbf{gr})}^k(\Lambda^l \circ(\mathfrak{a}\otimes \mathbb{Q}),F\circ(\mathfrak{a}\otimes \mathbb{Q})), \mathbb{Q})$$
where the first isomorphism is Djament's isomorphism (\ref{Djament-thm}) and the second is the Tor-Ext isomorphism (\ref{Ext-Tor}).

For $F=T^d$, by Theorem \ref{Ext-ext} we obtain:
$$\underset{n \in \mathbb{N}}{\text{colim}}\ H_i(Aut(F_n), T^d\circ(\mathfrak{a}\otimes \mathbb{Q})(H_n)) \simeq
\left\lbrace\begin{array}{ll}
 0 & \text{if } i \neq d\\
  \underset{k+l=d}{\bigoplus} \mathbb{Q}^{S(d,l)}=\overset{d}{\underset{l=0}{\bigoplus}} \mathbb{Q}^{S(d,l)}=\mathbb{Q}^{B(d)}& \text{if } i=d\\
 \end{array}
 \right.$$
The other results are obtained in a similar way.
\end{proof}

\bibliographystyle{amsplain}
\bibliography{biblio-ext}

\end{document}